\documentclass[11pt,a4paper]{amsart}
\usepackage{amssymb,amsmath,amsthm,amsopn,amsfonts,amscd,enumerate,mathtools}
\usepackage[foot]{amsaddr}
\usepackage{tikz}
\usepackage{tikz-cd}
\usepackage{enumitem}
\usepackage{amsaddr}
\usepackage{etoolbox}
\usepackage{upgreek}\usetikzlibrary{matrix}
\usepackage[utf8]{inputenc}
\usepackage{hyperref}
\usepackage{thmtools,thm-restate}
\newcommand\Pic{\operatorname{Pic}}
\newcommand\Br{\operatorname{Br}}
\newcommand{\proset}{\,\mathrel{\lower 4pt\hbox{$\scriptscriptstyle/$}
\mkern -14mu\subseteq }\,}

\newcounter{thmcount}
\newtheorem{defn}{Definition}[section]

\newtheorem{lem}[defn]{Lemma}

\newtheorem{prop}[defn]{Proposition}
\newtheorem*{prop*}{Proposition}
\newtheorem{thm}[defn]{Theorem}
\newtheorem{cor}[defn]{Corollary}

\newtheorem*{claim*}{Claim}

\theoremstyle{remark}

\theoremstyle{remark}

\theoremstyle{remark}
\newtheorem{exmp}[defn]{Example}
\AtEndEnvironment{exmp}{\hfill$\Diamond$}
\theoremstyle{remark}

\theoremstyle{remark}

\theoremstyle{remark}

\theoremstyle{remark}

\theoremstyle{remark}

\theoremstyle{remark}
\newtheorem{rmk}[defn]{Remark}
\theoremstyle{remark}

\numberwithin{equation}{section}

 \makeatother
\title[Relative Brauer groups and Kummer sequence in $fppf$-topology]{Relative Brauer groups and Kummer Sequence without characteristic constraint in $fppf$-topology}
\author{Sourayan Banerjee}
\address{Department of Mathematics \\ Indian Institute of Technology Kharagpur\\west Bengal-721302, India}
\email{sourayanbanerjee@gmail.com (Corresponding author)}
\keywords{Relative Brauer groups, Subintegral map, Kummer's sequence}
\date{}
\subjclass[2020]{14F20, 14F22, 14C35, 19E08}
\date{} 
 
\begin{document}
\maketitle
\begin{abstract}
    Let $f: X\rightarrow S$ be a faithful affine map. In this article we construct a group homomorphism $\theta_{{fppf}}: \text{Br}(f) \longrightarrow H^1_{{fppf}}\!\left(S, (f_*\mathcal{O}_X^\times / \mathcal{O}_S^\times)_{{fppf}}\right)$, extending the formerly defined morphism $\theta_{et}$ over an \'etale site. In doing so, we eliminate the characteristic constraint previously found in the relative version of Kummer's exact sequence. Furthermore, given a finite subintegral extension of noetherian rings $A \hookrightarrow B$, we show that all the $fppf$ cohomology groups, $H^i_{fppf}(Spec(A),\mu^f_n); \;\forall i\geq 0$ vanish whenever $n$ is coprime to the positive characteristic of $A$. 
\end{abstract}

\section{Introduction}
For a morphism of schemes $f: X \to S$, the relative Brauer group $\text{Br}(f)$ in the sense of Bass \cite{Bass} is defined to be the Grothendieck group $K_0(\text{Az}(f^*))$ of relative Azumaya triples. Where the classical relative Brauer group $\text{Br}(X/S) := \ker(\text{Br}(S) \to \text{Br}(X))$, the group $\text{Br}(f)$ measures $K$-theoretic invariants of the base $S$ and hence can be placed into a natural exact sequence:$$\text{Pic}(S) \longrightarrow \text{Pic}(X) \longrightarrow \text{Br}(f) \longrightarrow \text{Br}(S) \longrightarrow \text{Br}(X).$$ In \cite{VS}, the author constructed a natural group homomorphism:$$\theta_{\text{et}}: \text{Br}(f) \longrightarrow H^1_{\text{et}}\!\left(S, (f_*\mathcal{O}_X^\times / \mathcal{O}_S^\times)_{\text{et}}\right)$$ for any faithful affine map $f: X \to S$ of noetherian schemes. Furthermore, in \cite[\S~6]{VS} the author established a relative variant of Kummer's exact sequence in the \'etale topology:$$0 \longrightarrow \mu_n^f \longrightarrow (f_*\mathcal{O}_X^\times / \mathcal{O}_S^\times)_{\text{et}} \xrightarrow{\;\;[n]\;\;} (f_*\mathcal{O}_X^\times / \mathcal{O}_S^\times)_{\text{et}} \longrightarrow 0$$which yields an explicit description of the relative $n$-torsion subgroups through the low-degree long exact sequence in \'etale cohomology. While the construction of $\theta_{\text{et}}$ function in \cite{VS} is generally for faithful affine maps, the exactness of the relative Kummer sequence in the \'etale topology requires two critical restrictions \cite[Proposition~6.2]{VS}. Firstly, $f: X \to S$ must be a faithful finite map, and secondly, the integer $n$ must be invertible in the residue fields $k(s)$ for all $s \in S$ (i.e., $\text{char}(k(s)) \nmid n$). Thus, in arbitrary characteristic, the multiplication map $x \mapsto x^n$ on unit sheaves fails to be surjective in the \'etale topology. Extracting $n$-th roots $T^n - u = 0$ over a base ring $R$ yields purely inseparable algebra extensions $R \to R[T]/(T^n - u)$, which are finite and flat, but not \'etale. Consequently, the \'etale relative Kummer sequence breaks down for prime power exponents, leaving the computation of the relative cohomological Brauer group beyond the scope of \'etale methods. One of the main contributions of this article is to overcome this characteristic limitation by translating the framework found in \cite{VS} to the ${fppf}$ topology (faithfully flat and finitely presented). We explicitly show that even though the machinery developed by Sadhu does not lift naturally to the ${fppf}$ site $S_{{fppf}}$ because of non-isomorphic $\check{c}ech$ and $fppf$ cohomology, there is a way around via sheaf torsors. Hence, as an integral part of this article, we construct a natural map in $\S~$\ref{theta} with the help of sheaf torsors (contrast to the presheaf torsors used in \cite{VS}) as follows :
\begin{thm}\label{thetafp}
Let $f: X \to S$ be a faithful affine map of noetherian schemes, then there exists a canonical group homomorphism
$$\theta_{{fppf}}: \text{Br}(f) \longrightarrow H^1_{{fppf}}\!\left(S, (f_*\mathcal{O}_X^\times / \mathcal{O}_S^\times)_{{fppf}}\right).$$
\end{thm}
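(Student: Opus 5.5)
Since $\text{Br}(f)=K_0(\text{Az}(f^*))$ is a Grothendieck group, I will define $\theta_{fppf}$ on objects of $\text{Az}(f^*)$, check that it is invariant under isomorphism, and check that it turns the defining relations of $K_0$ (orthogonal sums, or tensor products, as in Bass's construction) into addition in $H^1$. Let $(A,P,\alpha)$ be a relative Azumaya triple: $A$ is an Azumaya $\mathcal{O}_S$-algebra, $P$ is a locally free $\mathcal{O}_X$-module of finite rank, and $\alpha: f^*A \xrightarrow{\sim} \mathcal{E}nd_{\mathcal{O}_X}(P)$ is an isomorphism of algebras. Put $\mathcal{G}:=(f_*\mathcal{O}_X^\times/\mathcal{O}_S^\times)_{fppf}$, the fppf sheafification. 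By a standard result, $H^1_{fppf}(S,\mathcal{G})$ classifies isomorphism classes of fppf $\mathcal{G}$-torsors, i.e.\ sheaf torsors. So the plan is to attach to each triple a $\mathcal{G}$-torsor sheaf $\mathcal{T}_{(A,P,\alpha)}$, and to avoid Čech cocycles, which need not compute $H^1_{fppf}$ for $\mathcal{G}$.

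\textbf{Construction of the torsor.} For $U\to S$ flat of finite presentation, with $X_U=X\times_S U$, let $\mathcal{T}^{pre}(U)$ be the set of pairs $(Q,\phi)$ with $\phi: A_U\xrightarrow{\sim}\mathcal{E}nd(Q)$ and $Q$ free of finite rank over $\mathcal{O}_U$, together with an identification $f^*Q\cong P_U$ compatible with $\alpha$. Modulo $\mathcal{O}_U^\times$-rescaling, such identifications are parametrised by isomorphisms $f_U^*Q\xrightarrow{\sim}P_U$ intertwining $f^*\phi$ and $\alpha$. By Skolem--Noether, any two such isomorphisms differ by a unit of $\Gamma(X_U,\mathcal{O}^\times)=f_*\mathcal{O}_X^\times(U)$. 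Here faithfulness and affineness of $f$ give $\mathcal{O}_S\hookrightarrow f_*\mathcal{O}_X$ and let us work with global sections on $X_U$. Units coming from $\mathcal{O}_U^\times$ can be absorbed by rescaling the identification of $Q$. This yields a presheaf with an action of $f_*\mathcal{O}_X^\times/\mathcal{O}_S^\times$ that is free and transitive wherever it is nonempty. Azumaya algebras split fppf-locally, indeed étale-locally, and $P$ is Zariski-locally free. Hence $\mathcal{T}^{pre}$ is locally nonempty. Let $\mathcal{T}$ be its fppf sheafification. Sheafification is exact and commutes with finite products, so the torsor axioms (the action map $\mathcal{G}\times\mathcal{T}\to\mathcal{T}\times\mathcal{T}$ being an isomorphism, and local nonemptiness) pass to $\mathcal{T}$. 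Define $\theta_{fppf}[(A,P,\alpha)]=[\mathcal{T}]$.

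\textbf{Well-definedness and additivity.} An isomorphism of triples induces an isomorphism of presheaf torsors, and therefore of their sheafifications. For additivity, suppose $(Q,\phi)\in\mathcal{T}^{pre}_1(U)$ and $(Q',\phi')\in\mathcal{T}^{pre}_2(U)$. Then $(Q\otimes Q',\phi\otimes\phi')$ lies in $\mathcal{T}^{pre}_{1\otimes 2}(U)$, and this gives a $\mathcal{G}$-bilinear map $\mathcal{T}_1\times\mathcal{T}_2\to\mathcal{T}_{1\otimes2}$. It therefore induces an isomorphism $\mathcal{T}_1\wedge^{\mathcal{G}}\mathcal{T}_2\cong\mathcal{T}_{1\otimes 2}$, which is the contracted product representing the sum in $H^1$. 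The remaining relations in Bass's $K_0$ come from triples with $A=\mathcal{E}nd(Q_0)$ split. For these, $\mathcal{T}^{pre}$ has a global section, so the class is trivial. This also shows that $\theta_{fppf}$ is canonical and compatible with $\theta_{et}$, via the comparison $H^1_{et}\to H^1_{fppf}$ induced by the map from the étale sheafification to the fppf sheafification.

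\textbf{Main obstacle.} The delicate point is that $\mathcal{G}$ is a sheafified quotient. Its sections are only locally classes of units, so the action of $\mathcal{G}$ on $\mathcal{T}$ has to be defined after sheafification, and one must check that freeness survives. Freeness needs the following: if a unit $u\in\Gamma(X_U,\mathcal{O}^\times)$ fixes a section, then $u$ is fppf-locally in $\mathcal{O}_U^\times$. I will derive this from Skolem--Noether applied fppf-locally together with the injectivity $\mathcal{O}_U\hookrightarrow f_*\mathcal{O}_{X_U}$. That injectivity needs $f$ faithful to be stable under flat base change, and it is, because flat base change preserves injectivity of $\mathcal{O}_S\to f_*\mathcal{O}_X$ for affine $f$.
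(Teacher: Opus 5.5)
\textbf{There is a genuine gap.} Rigidifying by one splitting plus an identification with $P$ is a reasonable idea: it makes the ambiguity directly $\Gamma(X_U,\mathcal{O}^\times)/\Gamma(U,\mathcal{O}_U^\times)$ and would avoid the paper's detour through $\tilde{\Pic}^f$ and the twisted maps $\nu$. But you attach it to the wrong generators and relations.

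In the paper, $\Br(f)=K_0(\mathrm{Az}(f^*))$ is generated by triples $(A_1,\alpha,A_2)$. Here $A_1,A_2\in\mathrm{Az}(S)$, and $\alpha$ is a morphism $f^*A_1\to f^*A_2$ in $\mathrm{Az}(X)$, i.e.\ a class of data $(P,u,Q)$ with $u\colon f^*A_1\otimes\mathrm{End}(P)\cong f^*A_2\otimes\mathrm{End}(Q)$ modulo stabilisation. The relations are the tensor relation and the composition relation $[(A_1,\alpha,A_2)]+[(A_2,\beta,A_3)]=[(A_1,\beta\alpha,A_3)]$.

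Your triples $(A,P,\alpha)$ with $\alpha\colon f^*A\cong\mathcal{E}nd(P)$ are not these generators. You never check independence of the representative $(P,u,Q)$, and you never address the composition relation. The paper obtains exactly these from Sadhu's Lemmas 4.5 and 4.7(2) on $\tilde{\mathrm{Az}}(f^*)$, and then composes with $\Upsilon$.

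Moreover, the relation you do use, that split triples have trivial class, is false for your own torsor. Take $(\mathcal{O}_S,L,\mathrm{can})$ with $L\in\Pic(X)$. Then $\mathcal{T}^{pre}(U)=\mathrm{Isom}(\mathcal{O}_{X_U},L_U)/\Gamma(U,\mathcal{O}_U^\times)$, which is the pushout along $f_*\mathcal{O}_X^\times\to\mathcal{I}_{fppf}$ of the torsor of trivialisations of $L$. For $f$ finite, its class is the image of $[L]$ under $H^1(S,f_*\mathcal{O}_X^\times)\to H^1(S,\mathcal{I}_{fppf})$. By the long exact sequence of $0\to\mathcal{O}_S^\times\to f_*\mathcal{O}_X^\times\to\mathcal{I}_{fppf}\to 0$, this is nonzero as soon as $L\notin f^*\Pic(S)$, e.g.\ for $\mathbb{Z}\subset\mathbb{Z}[\sqrt{-5}]$. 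A global section would require $f^*Q\cong L$.

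This matches the sequence $\Pic(S)\to\Pic(X)\to\Br(f)\to\Br(S)$, in which such triples represent the classes of $\Pic(X)/\Pic(S)$. If you really impose ``split triples are zero'', your group collapses to $\ker(\Br(S)\to\Br(X))$. Your assignment is not well defined on that group either, since replacing $P$ by $P\otimes L$ shifts the class by the image of $[L]$.

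\textbf{A second problem is the local non-emptiness step.} You justify it by ``$P$ is Zariski-locally free'', but that is local on $X$. A section of $\mathcal{T}^{pre}$ over $U\to S$ needs an isomorphism $f_U^*Q\cong P_U$ on all of $X_U$. For $f$ finite this can be arranged after localising on $S$, because $X\times_S\mathrm{Spec}\,\mathcal{O}_{S,s}$ is semilocal. The statement, however, is for faithful affine $f$, and there it fails.

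Take $S=\mathrm{Spec}\,k$ with $k$ algebraically closed, $X$ an affine curve with a nontrivial line bundle $L$, and the triple $(\mathcal{O}_S,L,\mathrm{can})$. Every fppf $U\to S$ has a $k$-point, and restricting along it recovers $L$. So $L_U$ is never trivial, and $\mathcal{T}^{pre}(U)=\emptyset$ for every $U$. As written, the construction does not produce a torsor at all in the generality of the theorem.
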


In $\S~$\ref{kummer}, we also generalize the relative Kummer's sequence mentioned above for arbitrary characteristic. To be precise, what we establish is the following:
\begin{thm}
For a faithfully flat finite map of noetherian schemes $f:X \to S$ and any $n\geq 1$ in $\mathbb{N}$, the sequence 
$$0 \longrightarrow \mu_n^f \longrightarrow (f_*\mathcal{O}_X^\times / \mathcal{O}_S^\times)_{{fppf}} \xrightarrow{\;\;[n]\;\;} (f_*\mathcal{O}_X^\times / \mathcal{O}_S^\times)_{{fppf}} \longrightarrow 0$$is short exact in $S_{{fppf}}$.
\end{thm}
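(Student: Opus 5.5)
We argue stalk-free, directly with local sections in $S_{fppf}$. Write $\mathcal{Q}:=(f_*\mathcal{O}_X^\times/\mathcal{O}_S^\times)_{fppf}$ for the fppf sheafification of the presheaf quotient. We take $\mu_n^f$ to be the kernel of $[n]$ on $\mathcal{Q}$; if it is instead defined explicitly, as in \cite{VS}, then we first identify the two. The key tool is the fppf Kummer sequence $0\to\mu_n\to\mathbb{G}_m\xrightarrow{[n]}\mathbb{G}_m\to 0$, which is exact on $S_{fppf}$ and on $X_{fppf}$ for every $n$, with no condition on the characteristic. Exactness holds because $R\to R[T]/(T^n-u)$ is finite, free of rank $n$, hence fppf, whether or not it is étale.

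The plan has three steps.

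\begin{enumerate}
\item \emph{Commuting with base change.} Since $f$ is affine, for any $U\to S$ in $S_{fppf}$ we have $f_*\mathcal{O}_X^\times(U)=\Gamma(X\times_S U,\mathcal{O}^\times)$. Hence $f_*\mathbb{G}_{m,X}$ is an fppf sheaf, and we have a short exact sequence of fppf sheaves
$$1\to\mathbb{G}_{m,S}\to f_*\mathbb{G}_{m,X}\to\mathcal{Q}\to 1.$$
Injectivity of the first map uses faithful flatness of $f$: the ring map $\mathcal{O}_U\to\mathcal{O}_{X_U}$ is injective after any base change.

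\item \emph{Surjectivity of $[n]$ on $\mathcal{Q}$.} Take a local section of $\mathcal{Q}$. After refining, we may assume $U=\mathrm{Spec}\,R$ is affine and the section lifts to $u\in (B\otimes_A R)^\times$, where $f$ is locally $\mathrm{Spec}\,B\to\mathrm{Spec}\,A$. We must produce an fppf cover of $U$ on which $u$ becomes an $n$-th power. The difficulty is that $u$ lives on $X_U$, not on $U$. Adjoining an $n$-th root of $u$ over $B_R$ gives a cover of $X_U$, not of $U$.

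To fix this, use that $f$ is finite flat, so $B_R$ is finite locally free over $R$, say of rank $d$ after localizing. Consider the Weil restriction $W:=\mathrm{Res}_{B_R/R}\bigl(B_R[T]/(T^n-u)\bigr)$. It represents $R'\mapsto\{t\in B_{R'}: t^n=u\}$.

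We claim $W\to U$ is fppf. In coordinates with respect to a basis $e_1,\dots,e_d$ of $B_R$, write $t=\sum x_ie_i$. Then $W$ is cut out by the $d$ equations $t^n=u$ in $\mathbb{A}^d_R$. Finite presentation is clear. For flatness and surjectivity, check fibrewise: over a field $k$, $B_k$ is a finite $k$-algebra and $u$ a unit in it. The scheme $W_k$ is a torsor under $\mathrm{Res}_{B_k/k}\mu_n$, which is a finite flat group scheme of order $n^d$. It is nonempty because over $\bar k$ the algebra $B_{\bar k}$ is a product of local Artinian rings, and in each of these units have $n$-th roots fppf-locally. Better: $W$ itself is a torsor under $\mathrm{Res}_{B_R/R}\mu_n$. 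That group scheme is finite locally free, because Weil restriction along a finite locally free map preserves finite local freeness. Fppf-local triviality of this torsor follows from fppf-local existence of roots, which is the point to be checked.

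If the Weil restriction argument proves awkward, the fallback is to show directly that $W\to U$ is finite, flat and surjective. Finiteness holds because $t$ is integral over $R$: it satisfies $t^n=u$ with $u$ integral over $R$. Flatness would come from a local-complete-intersection dimension count, namely $d$ equations in $\mathbb{A}^d$ with finite fibres, combined with miracle flatness, provided the base is reduced to a noetherian regular case or handled by the fibrewise flatness criterion. On $W$ the element $u$ becomes an $n$-th power, which gives surjectivity of $[n]$.

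\item \emph{Kernel.} By the snake lemma applied to $[n]$ acting on the sequence in step 1, and using that $[n]$ is surjective on $\mathbb{G}_{m,S}$ in the fppf topology, we get an exact sequence
$$0\to\mu_{n,S}\to f_*\mu_{n,X}\to\ker([n]|_{\mathcal{Q}})\to 0.$$
This identifies $\ker([n]|_{\mathcal{Q}})$ with $\mu_n^f$, realized as $f_*\mu_{n,X}/\mu_{n,S}$, matching the relative $\mu_n^f$ of \cite{VS}. Injectivity of $\mu_n^f\to\mathcal{Q}$ is part of this exact sequence.
\end{enumerate}

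The main obstacle is step 2: proving that the Weil restriction $W$ is fppf over $U$, so that a root of a unit of $X_U$ can be found after an fppf cover of the base $U$ rather than of $X_U$. Finiteness and local freeness of $f$ are essential there. This replaces the étale argument of \cite[Proposition~6.2]{VS}, where invertibility of $n$ made $W$ étale.
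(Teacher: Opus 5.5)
There is a genuine gap in Step 2, and it cannot be closed, because the statement is false as stated. The claims you rely on to make $W$ a cover all fail in characteristic $p$ with $p\mid n$ as soon as $f$ has a non-reduced geometric fibre.

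Take $S=\mathrm{Spec}\,k$ with $\mathrm{char}\,k=p>0$, $X=\mathrm{Spec}\,k[\epsilon]/(\epsilon^2)$ (finite free of rank $2$, hence faithfully flat), and $n=p$.
\begin{itemize}
\item For every $k$-algebra $R$ one has $(a+b\epsilon)^p=a^p\in R$. So $u=1+\epsilon$ has no $p$-th root in $R[\epsilon]$ for any $R\neq 0$. Your $W$ is therefore empty, and it is false that units of local Artinian $\bar k$-algebras acquire $p$-th roots fppf-locally.
\item $\mathrm{Res}_{k[\epsilon]/k}\mu_p\cong\mu_p\times\mathbb{G}_a$ is not finite. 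So Weil restriction does not preserve finite local freeness.
\item In your fallback, integrality of $t$ over $R$ says nothing about the coordinates $x_i$: for $u=1$, $W=\mu_p\times\mathbb{A}^1$ is not finite.
\item The sentence ``fppf-local triviality \dots follows from fppf-local existence of roots'' is circular, since that existence is exactly what Step 2 must prove.
\end{itemize}

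Your own Step 3 shows why no repair is possible. Since $[n]$ is an fppf epimorphism on $\mathbb{G}_{m,S}$, the snake lemma gives $\mathrm{coker}([n]\ \text{on}\ \mathcal{Q})\cong\mathrm{coker}([n]\ \text{on}\ f_*\mathbb{G}_{m,X})$. In the example above, $\mathcal{Q}(R)=R[\epsilon]^\times/R^\times=\{1+c\epsilon\}\cong R$, so $\mathcal{Q}\cong\mathbb{G}_a$ and $[p]$ is the zero map on it. The section $1+\epsilon$ stays nonzero on every nonempty $U$, so $[p]$ is not an epimorphism. This is precisely the paper's own Example~\ref{nonzero} (where $\mu^f_{p^k}=\mathcal{I}_{fppf}\cong\mathbb{G}_a$), which contradicts Theorem~\ref{relkum}.

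The paper's proof breaks at exactly the point you flagged. It adjoins $T$ with $T^n=u$ to $\mathcal{O}(X_U)$ and treats $U'=\mathrm{Spec}\,R'$ as a cover of $U$. The composite $U'\to X_U\to U$ is indeed fppf. However, on $X_{U'}$ the section $u$ pulls back to $u\otimes 1\in\mathcal{O}(X_U)\otimes_R R'$. By contrast, $T^n=1\otimes u$ lies in the image of $\mathcal{O}(U')^\times$ and is trivial in $\mathcal{I}(U')$, so no root of the relevant element is produced.

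The sequence is exact in two situations:
\begin{itemize}
\item when $n$ is invertible on $S$, since then $[n]$ on the smooth, fibrewise connected group $\mathrm{Res}_{X/S}\mathbb{G}_m$ is étale and surjective;
\item when $f$ is finite étale, since then étale-locally $f_*\mathbb{G}_{m,X}\cong\mathbb{G}_m^d$.
\end{itemize}
In those cases your Weil-restriction argument does go through. Beyond them, the characteristic-free claim is false.
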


Applying the long exact sequence of $fppf$-cohomology, the following theorem immediately holds for any such faithfully flat finite map $f: X \to S$ between noetherian schemes.
\begin{thm}
The sequence
$$0 \longrightarrow \text{Pic}(f) \otimes \mathbb{Z}/n\mathbb{Z} \longrightarrow H^1_{{fppf}}(S, \mu_n^f) \longrightarrow {}_n H^1_{{fppf}}\!\left(S, (f_*\mathcal{O}_X^\times / \mathcal{O}_S^\times)_{{fppf}}\right) \longrightarrow 0$$ is exact for any nonzero natural number $n$. 
\end{thm}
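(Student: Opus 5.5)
The plan is to apply the long exact sequence of $fppf$-cohomology to the short exact sequence of the preceding theorem (the relative Kummer sequence in $S_{fppf}$). Write $\mathcal{Q}:=(f_*\mathcal{O}_X^\times/\mathcal{O}_S^\times)_{fppf}$. The long exact sequence gives
$$H^0_{fppf}(S,\mathcal{Q})\xrightarrow{[n]} H^0_{fppf}(S,\mathcal{Q})\longrightarrow H^1_{fppf}(S,\mu_n^f)\longrightarrow H^1_{fppf}(S,\mathcal{Q})\xrightarrow{[n]} H^1_{fppf}(S,\mathcal{Q}).$$
Cutting it at the middle term yields a short exact sequence
$$0\to H^0_{fppf}(S,\mathcal{Q})\otimes\mathbb{Z}/n\mathbb{Z}\to H^1_{fppf}(S,\mu_n^f)\to {}_nH^1_{fppf}(S,\mathcal{Q})\to 0.$$
The cokernel of $[n]$ on $H^0$ is $H^0\otimes\mathbb{Z}/n$, and the kernel of $[n]$ on $H^1$ is the $n$-torsion subgroup. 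So everything reduces to identifying $H^0_{fppf}(S,\mathcal{Q})$ with $\mathrm{Pic}(f)$ in a way compatible with multiplication by $n$.

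For that identification I would use the short exact sequence of $fppf$ sheaves
$$0\to\mathbb{G}_{m,S}\to f_*\mathbb{G}_{m,X}\to\mathcal{Q}\to 0.$$
Injectivity on the left holds because $f$ is faithful, and $\mathcal{Q}$ is by definition the sheafified quotient. Its long exact sequence reads
$$\mathcal{O}(S)^\times\to\mathcal{O}(X)^\times\to H^0(S,\mathcal{Q})\to H^1_{fppf}(S,\mathbb{G}_m)\to H^1_{fppf}(S,f_*\mathbb{G}_m).$$
Three inputs then give the comparison:
\begin{enumerate}
\item Hilbert~90 in the $fppf$ topology: $H^1_{fppf}(S,\mathbb{G}_m)=\mathrm{Pic}(S)$.
\item Since $f$ is finite (hence affine), $R^1f_*\mathbb{G}_m$ vanishes $fppf$-locally on affine bases, so the Leray spectral sequence gives $H^1_{fppf}(S,f_*\mathbb{G}_m)\hookrightarrow H^1_{fppf}(X,\mathbb{G}_m)=\mathrm{Pic}(X)$.
\item The relative units--Picard sequence for Bass's relative group, $\mathcal{O}(S)^\times\to\mathcal{O}(X)^\times\to\mathrm{Pic}(f)\to\mathrm{Pic}(S)\to\mathrm{Pic}(X)$.
\end{enumerate}
The five lemma, applied to a natural map $\mathrm{Pic}(f)\to H^0(S,\mathcal{Q})$, then gives an isomorphism. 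That natural map sends a relative line bundle triple $(L,\alpha,\dots)$ to the section of $\mathcal{Q}$ obtained by locally trivializing $L$ and taking the resulting unit on $X$ modulo units on $S$. I expect to define it using the same torsor bookkeeping as in the construction of $\theta_{fppf}$ in \S\ref{theta}. Any homomorphism defined this way is automatically compatible with $[n]$.

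The main obstacle I expect is exactly this identification $H^0_{fppf}(S,\mathcal{Q})\cong\mathrm{Pic}(f)$. It requires checking that the map is well defined and independent of the chosen trivializations, and that the diagram with the two long exact sequences commutes, including the sign of the connecting map to $\mathrm{Pic}(S)$. The injectivity $H^1_{fppf}(S,f_*\mathbb{G}_m)\to\mathrm{Pic}(X)$ also deserves care. If the general Leray argument is awkward, I would instead argue directly: an $f_*\mathbb{G}_m$-torsor trivialized on an $fppf$ cover $\{S_i\to S\}$ pulls back to a $\mathbb{G}_m$-torsor on $X$ trivialized on the cover $\{X\times_S S_i\}$. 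Conversely, a line bundle on $X$ whose pushforward-torsor is trivial is itself trivial. Once the identification is in place, the exactness of the displayed sequence follows formally from the previous theorem.
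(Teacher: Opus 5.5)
You follow the paper's route exactly: take the long exact $fppf$-cohomology sequence of the relative Kummer sequence, cut it at $H^1_{fppf}(S,\mu_n^f)$, and identify $H^0_{fppf}(S,\mathcal{I}_{fppf})$ with $\mathrm{Pic}(f)$. The only difference is in that last identification. The paper quotes [SW, Lemma~5.4], while you prove it from $0\to\mathbb{G}_{m,S}\to f_*\mathbb{G}_{m,X}\to\mathcal{I}_{fppf}\to 0$, $fppf$ Hilbert~90, and a diagram chase.

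That step is sound, and it is not where the real difficulty lies. Two small remarks on it:
\begin{enumerate}
\item The injection $H^1(S,f_*F)\hookrightarrow H^1(X,F)$ holds for any $f$ by the five-term Leray sequence, so you do not need $R^1f_*\mathbb{G}_m=0$.
\item Your fifth vertical arrow goes from $H^1_{fppf}(S,f_*\mathbb{G}_m)$ to $\mathrm{Pic}(X)$, the wrong direction for the textbook five lemma. Run the chase by hand instead; all it needs is that $\mathrm{Pic}(S)\to\mathrm{Pic}(X)$ factors through $H^1_{fppf}(S,f_*\mathbb{G}_m)$.
\end{enumerate}

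The genuine gap is the input you import from the preceding theorem: that $[n]\colon\mathcal{I}_{fppf}\to\mathcal{I}_{fppf}$ is an epimorphism of $fppf$ sheaves. This is false once $n$ is not invertible on $S$, and the statement fails along with it.

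Take the paper's own example $f\colon\mathrm{Spec}(k[x]/(x^2))\to\mathrm{Spec}(k)$ with $\mathrm{char}\,k=p$ and $n=p$. Here $\mathcal{I}_{fppf}\cong\mathbb{G}_a$ via $1+xs\mapsto s$. Since $(1+xs)^p=1$, the map $[p]$ is zero on a nonzero sheaf, so it is not surjective, and $\mu_p^f=\mathcal{I}_{fppf}\cong\mathbb{G}_a$. Consequently $H^1_{fppf}(\mathrm{Spec}\,k,\mu_p^f)=H^1(\mathrm{Spec}\,k,\mathbb{G}_a)=0$. On the other hand $\mathrm{Pic}(f)\cong(k[x]/(x^2))^\times/k^\times\cong(k,+)$ is $p$-torsion, so $\mathrm{Pic}(f)\otimes\mathbb{Z}/p\cong k\neq 0$. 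This nonzero group is supposed to inject into $0$.

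The paper's proof of the Kummer sequence breaks at the following point. $R'=\mathcal{O}(X_U)[T]/(T^n-u)$ is a cover of $X_U$, not a base change on $U$. If you view $U'=\mathrm{Spec}\,R'$ as a $U$-scheme, the section $u$ pulls back to $u\otimes 1\in\mathcal{O}(X_U)\otimes_{\mathcal{O}(U)}R'$, whereas $T^n=1\otimes u$. Put differently, $f_*$ is not right exact on $fppf$ sheaves, so $n$-th roots on $X$ do not give $n$-th roots of sections of $f_*\mathbb{G}_{m,X}$ or of its quotient.

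Your argument, like the paper's, is valid exactly when $[n]$ is an $fppf$ epimorphism. Two cases where this holds:
\begin{enumerate}
\item $n\in\Gamma(S,\mathcal{O}_S)^\times$. Over a strictly henselian base $U$, $\mathcal{O}(X_U)$ is a finite product of henselian local rings with separably closed residue fields, and Hensel's lemma gives $n$-th roots of units.
\item $f$ is finite \'etale.
\end{enumerate}
In general, the long exact sequence of $0\to\mu_n^f\to\mathcal{I}_{fppf}\to n\mathcal{I}_{fppf}\to 0$ only yields a statement involving the image sheaf $n\mathcal{I}_{fppf}$, not the one displayed.
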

This completely generalizes \cite[Theorem~6.3]{VS} to arbitrary characteristic when flatness is assumed, classifying relative $\mu_{p^k}^f$-torsors and capturing the $p$-torsion of the relative Brauer group via flat group scheme cohomology for a prime $p$. 

Finally, in $\S~$\ref{subint}, we recall the basics of subintegral extensions and show that over $\mathbb{Q}$-algebras, the results established in \cite[Theorem~5.1]{VS} follow verbatim. We also prove that working over $S_{fppf}$ ensures the following generalizations hold:
\begin{thm}
    Let $f: A \to B$ be a finite subintegral extension of noetherian rings and $A$ be of characteristic $m > 0$. Then the following holds 
   \begin{enumerate}
   \item $H^i_{{fppf}}(\text{Spec}(A), \mu_{n}^{f}) = 0 $ for all $i \ge 0$ and $n$ is invertible in $A$.
   \item If $m = p$, a prime and $n=p^k$ for $k\geq 1$, then $H^i_{{fppf}}(\text{Spec}(A), \mu_{n}^{f})$ does not vanish in general.
   \end{enumerate} 
\end{thm}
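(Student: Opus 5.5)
The approach for (1) is to show that the sheaf $\mu_n^f$ is already zero on $S_{fppf}$, with $S=\mathrm{Spec}(A)$, as soon as $n$ is invertible in $A$. Vanishing of all $H^i_{fppf}$ is then immediate. For (2), I would exhibit one explicit subintegral extension in characteristic $p$ with $H^0\neq 0$.

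I take $\mu_n^f$ to be the kernel of $[n]$ on $(f_*\mathcal{O}_X^\times/\mathcal{O}_S^\times)_{fppf}$, as in the relative Kummer sequence of \S\ref{kummer}. It therefore suffices to prove the following local statement. Let $T$ be any $A$-algebra, put $B_T=B\otimes_A T$, and let $A'$ be the image of $T\to B_T$. If $u\in B_T^\times$ satisfies $u^n\in A'^\times$, then $u$ is the image of a unit of $T$. This kills the presheaf kernel object-wise, hence the sheafified kernel.

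The argument for this statement has three steps.

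\emph{Step 1 (reduction).} Subintegrality is stable under base change, so $A'\subseteq B_T$ is again a finite subintegral extension. Moreover $\mathrm{Spec}(B_T)\to\mathrm{Spec}(T)$ is surjective, so $\ker(T\to A')$ lies in the nilradical, and units of $A'$ lift to units of $T$. It therefore suffices to show $u\in A'$; note that $u$ is then a unit of $A'$, since $u^n$ is.

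\emph{Step 2 (unramified intermediate ring).} Put $a=u^n\in A'^\times$ and consider $A'\subseteq A'[u]\subseteq B_T$. The ring $A'[u]$ is a quotient of $A'[Z]/(Z^n-a)$. This algebra is finite étale over $A'$ because $n$ and $a$ are invertible, so $A'[u]$ is finite and unramified over $A'$. Being sandwiched inside $B_T$, the extension $A'\subseteq A'[u]$ is also subintegral.

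\emph{Step 3 (Nakayama).} Fix a prime $\mathfrak p$ of $A'$. Subintegrality gives exactly one prime of $A'[u]$ over $\mathfrak p$, with residue field $k(\mathfrak p)$. Unramifiedness makes the fibre $A'[u]\otimes k(\mathfrak p)$ a finite product of separable field extensions of $k(\mathfrak p)$, hence equal to $k(\mathfrak p)$. By Nakayama for the finite module $A'[u]$, the inclusion $A'\to A'[u]$ is surjective at every localization, so $A'[u]=A'$ and $u\in A'$.

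The main obstacle I expect is bookkeeping rather than mathematics. One must check that $\mu_n^f$ on the big site is really computed by the presheaf $T\mapsto \ker\bigl(B_T^\times/\mathrm{im}(T^\times)\xrightarrow{n}\cdot\bigr)$. This requires care when $T\to B_T$ fails to be injective, which is why Step 1 passes to the image $A'$. Once the sheaf is zero, $H^i_{fppf}(\mathrm{Spec}(A),\mu_n^f)=0$ for all $i\geq 0$. The characteristic $m>0$ hypothesis enters only through the assumption that $n$ is invertible in $A$.

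For (2), take $A=\mathbb F_p$ and $B=\mathbb F_p[\varepsilon]/(\varepsilon^2)$, which is finite, flat and subintegral. Then $H^0_{fppf}(\mathrm{Spec}(A),\mu_{p^k}^f)$ contains the $p^k$-torsion of $B^\times/A^\times$. This torsion subgroup is $\{1+c\varepsilon\}\cong\mathbb F_p\neq 0$, since $(1+c\varepsilon)^p=1$. So the vanishing fails for $n=p^k$. Here Step 2 breaks down precisely because $\mathbb F_p[Z]/(Z^p-1)$ is not étale.
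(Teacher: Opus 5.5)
Your proof is correct. For part (2) it is the paper's own Example~\ref{nonzero} with $k=\mathbb{F}_p$. The one step you leave implicit is that $B^\times/A^\times$ injects into $\mathcal{I}_{fppf}(\mathrm{Spec}A)$. This is immediate, because $T\mapsto (T[\varepsilon]/(\varepsilon^2))^\times/T^\times\cong 1+\varepsilon T$ is already a sheaf, namely $\mathbb{G}_a$; that is the computation made in the example.

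For part (1) you take a genuinely different route. The paper never examines $\mu_n^f$ directly. Its argument runs as follows:
\begin{enumerate}
\item It invokes an fppf--\'etale comparison $H^i_{fppf}(\mathrm{Spec}A,\mathcal{I}_{fppf})\cong H^i_{et}(\mathrm{Spec}A,\mathcal{I}_{et})$ for $n$ invertible, presented as a generalization of \cite[Proposition~5.4(2)]{VS2}.
\item It notes that this comparison is compatible with $[n]$.
\item It matches the fppf relative Kummer sequence of Theorem~\ref{relkum} with the \'etale one.
\item It imports the \'etale vanishing of \cite[Theorem~6.4(1)]{VS}.
\end{enumerate}

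You instead show that the sheaf $\mu_n^f$ is itself zero. You use only:
\begin{enumerate}
\item its definition as $\ker[n]$;
\item exactness of sheafification;
\item stability of subintegrality under base change, with the passage to the image $A'$;
\item the elementary fact that a finite unramified subintegral extension is an equality, applied to $A'\subseteq A'[u]$.
\end{enumerate}

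Your approach buys a self-contained, sheaf-level vanishing that holds on every object of the site and for all $i$ at once. It does not use the noetherian hypothesis. It does not need the comparison isomorphism, which the paper only cites. It also does not need surjectivity of $[n]$. That last point matters: Theorem~\ref{relkum} is stated only for faithfully flat $f$, and the paper itself remarks that finite subintegral extensions are generally not flat.

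The paper's route, granting its inputs, gives more information about $\mathcal{I}_{fppf}$ itself, namely that its fppf and \'etale cohomology agree, rather than only about the $n$-torsion kernel. It also keeps the result inside Sadhu's \'etale framework.
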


\section{Azumaya algebras and Relative Brauer Group}\label{relBr}
 
Throughout this section, we will assume that $S$ is a noetherian scheme and $f: X \to S$ is a faithful, affine morphism of schemes, i.e., affine and the map $\mathcal{O}_S \to f_*\mathcal{O}_X$ is injective. We denote by $S_{{fppf}}$ the small ${fppf}$ site over $S$, consisting of faithfully flat, finitely presented morphisms $U \to S$. Below, we recall some basic definitions related to Azumaya algebras and Brauer groups that will be used in later sections. For a detailed reading regarding Brauer groups, we refer to \cite{Aus},\cite{MC}.
\begin{defn}
An $\mathcal{O}_S$-algebra $A$ is an Azumaya algebra over $S$ if $A$ is locally free of finite rank as an $\mathcal{O}_S$-module and the canonical map$$A \otimes_{\mathcal{O}_S} A^{\mathrm{op}} \longrightarrow \mathrm{End}_{\mathcal{O}_S}(A), \quad a \otimes b^{\mathrm{op}} \mapsto (x \mapsto axb)$$is an isomorphism of $\mathcal{O}_S$-algebras.
\end{defn}
Recall that $A$ and $A'$ as Azumaya algebra over the scheme $S$, are said to be similar if there exists locally free $\mathcal{O}_S$-modules $E$ and $E'$ such that $$A \otimes_{\mathcal{O}_S} \mathrm{End}(E) \cong A' \otimes_{\mathcal{O}_S} \mathrm{End}(E').$$
The collection of the similarity classes of Azumaya algebras, denoted as $[A]$, under the multiplication $[A]\cdot[A'] = [A\otimes A']$ forms an abelian group which is defined to be the Brauer group over $S$; $\mathrm{Br}(S)$.
We also recall the definition of the category $\mathrm{Az}(S)$ before proceeding further.
\begin{defn}\cite[\S~2.1]{VS}
    Let $\mathrm{Az}(S)$ be the category whose objects are Azumaya algebras over $S$, and the set of morphisms between two objects $A$ and $A'$ is defined by $$Hom_{\mathrm{Az}(S)}(A,A'):= \Delta (A,A')/\sim,$$ where $\Delta(A,A')$ is a set consisting of triples $(P,u,Q)$ with $P,Q$ are locally free $\mathcal{O}_S$-modules of finite rank and $u:A \otimes \mathrm{End}(P) \cong A' \otimes \mathrm{End}(Q)$ is an isomorphism of algebras. $(P,u,Q) \sim (P',u',Q')$ if there exist locally free $\mathcal{O}_S$-modules of finite rank $E$ and $E'$ over $S$ such that $P\otimes E \cong P'\otimes E'$ and $Q\otimes E \cong Q'\otimes E'$.
\end{defn}
For a brief discussion on the above and how in a similar fashion $\tilde{\mathrm{Az}}(S)$ is defined with the introduction of $\tilde{\Delta}(A,B)$, we refer to \cite[\S~2]{VS}.

Let us now recall the construction of the category $\mathrm{Az}(f^*)$ as was done in \cite[\S~2.3]{VS}. The objects are triples $(A_1, \alpha, A_2)$, where $A_1, A_2 \in \mathrm{Az}(S)$ are Azumaya algebras over $S$, and $\alpha: f^* A_1 \xrightarrow{\sim} f^* A_2$ is an isomorphism in the category of Azumaya algebras $\mathrm{Az}(X)$. The morphisms in this category are to be taken as follows: A morphism $(u, v): (A_1, \alpha, A_2) \to (A_1', \alpha', A_2')$ is a pair of Azumaya algebra morphisms $u: A_1 \to A_1'$ and $v: A_2 \to A_2'$ over $S$ such that the following diagram in $\mathrm{Az}(X)$ commutes:$$\begin{array}{ccc} f^*A_1 & \xrightarrow{\;\;\alpha\;\;} & f^*A_2 \\ \Big\downarrow\scriptstyle f^*u && \Big\downarrow\scriptstyle f^*v \\ f^*A_1' & \xrightarrow{\;\;\alpha'\;\;} & f^*A_2' \end{array}$$
The definition of the relative Brauer group, as Sadhu defined following Bass \cite{Bass}, we record it below.
\begin{defn}\label{relbr}
The relative Brauer group $\mathrm{Br}(f)$ is defined as the Grothendieck group $K_0(\mathrm{Az}(f^*))$. It is generated by isomorphism classes $[(A_1, \alpha, A_2)]$ subject to the relations:
\begin{enumerate}
\item $[(A_1, \alpha, A_2)] + [(A_1', \alpha', A_2')] = [(A_1 \otimes A_1', \alpha \otimes \alpha', A_2 \otimes A_2')]$.

\item $[(A_1, \alpha, A_2)] + [(A_2, \beta, A_3)] = [(A_1, \beta\alpha, A_3)]$.
\end{enumerate}
\end{defn}
In \cite[\S~2.4]{VS}, the author explicitly constructs the category $\tilde{\mathrm{Az}}(f^*)$ and gives a natural map between two relative Brauer groups, which we now recall as a lemma.
\begin{lem}\label{BrtoBr}
   Let $\tilde{\Br}(f):= K_0(\tilde{\mathrm{Az}}(f^*)$, then there is a natural group homomorphism $$\Upsilon : \Br(f) \rightarrow \tilde{\Br}(f).$$
\end{lem}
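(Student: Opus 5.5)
The approach is to define $\Upsilon$ on generators by sending a relative Azumaya triple to the same triple, now read in $\tilde{\mathrm{Az}}(f^*)$, and then to check that the defining relations of $K_0$ are respected. As recalled from \cite[\S~2]{VS}, the category $\tilde{\mathrm{Az}}(S)$ has the same objects as $\mathrm{Az}(S)$, namely Azumaya algebras over $S$. Its morphism sets are $\tilde{\Delta}(A,A')/\!\sim$, where $\tilde{\Delta}$ is obtained from $\Delta$ by coarsening the equivalence on the triples $(P,u,Q)$. The first step is therefore to construct a canonical functor $\iota_S:\mathrm{Az}(S)\to\tilde{\mathrm{Az}}(S)$. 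It is the identity on objects and sends the class of $(P,u,Q)$ to its class in $\tilde{\Delta}(A,A')/\!\sim$. This is well defined because every relation in $\Delta$ induces the corresponding relation in $\tilde{\Delta}$. Composition is defined by tensoring the auxiliary bundles, $(P',u',Q')\circ(P,u,Q)=(P\otimes P',\ldots,Q\otimes Q')$, in the same way in both categories, so $\iota_S$ is a functor. It also commutes with $\otimes$, and this compatibility holds up to a canonical natural isomorphism.

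Next, I would check that $\iota$ is compatible with pullback along $f$. That is, I would verify that $f^*\circ\iota_S=\iota_X\circ f^*$ on objects and on morphisms. This holds because $f^*$ of $(P,u,Q)$ is $(f^*P,f^*u,f^*Q)$ in both settings. With this in hand, $\iota$ induces a functor $\iota_f:\mathrm{Az}(f^*)\to\tilde{\mathrm{Az}}(f^*)$ by the rule $(A_1,\alpha,A_2)\mapsto(A_1,\iota_X(\alpha),A_2)$ and $(u,v)\mapsto(\iota_S u,\iota_S v)$. The commutative square defining a morphism in $\mathrm{Az}(f^*)$ maps to a commutative square in $\tilde{\mathrm{Az}}(X)$, so morphisms are sent to morphisms. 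It follows that isomorphism classes go to isomorphism classes.

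Finally, I define $\Upsilon[(A_1,\alpha,A_2)]=[(A_1,\iota_X\alpha,A_2)]$ on generators and verify the two relations of Definition \ref{relbr}. For relation (1) I use $\iota_X(\alpha\otimes\alpha')=\iota_X\alpha\otimes\iota_X\alpha'$. For relation (2) I use $\iota_X(\beta\alpha)=\iota_X\beta\circ\iota_X\alpha$, which is functoriality of $\iota_X$. Since $\tilde{\mathrm{Br}}(f)$ is defined by the same two relations, the assignment descends to a group homomorphism from the free abelian group on isomorphism classes modulo these relations. Naturality in $f$ follows because all the constructions are compatible with base change.

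The main obstacle is the first step: checking that composition and tensor product of morphisms are well defined on the equivalence classes and agree under $\iota$. This matters above all for relation (2), where $\beta\alpha$ is a composite involving the auxiliary bundles. I would handle it by fixing explicit representatives and exhibiting the required bundles $E,E'$ directly, tensoring the witnesses of the two equivalences.
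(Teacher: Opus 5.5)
\paragraph{The gap: the definition of $\tilde{\mathrm{Az}}(S)$ is assumed, not established.} The paper gives no argument for this lemma; it simply defers to \cite[\S~2.4]{VS}. Your proof therefore rests entirely on the description of $\tilde{\mathrm{Az}}(S)$ that you attribute to \cite[\S~2]{VS}. In that description $\tilde{\mathrm{Az}}(S)$ has the same objects and the same triples $(P,u,Q)$ as $\mathrm{Az}(S)$, with only the equivalence relation coarsened. Then $\iota_S$ is a quotient functor, and $\Upsilon$ is ``the same triple read in $\tilde{\mathrm{Az}}(f^*)$''. The paper never states this; it only says that $\tilde{\mathrm{Az}}(S)$ is defined ``in a similar fashion'' via $\tilde\Delta(A,B)$. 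Yet this description is the whole content of the lemma. Granted it, relations (1) and (2) follow automatically from $\iota$ being a functor compatible with $f^*$ and $\otimes$. As written, the key step is postulated rather than proved.

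\paragraph{The paper's other tilde objects suggest a twisted definition.} $\tilde{\Pic}(f)$ is not a coarsening of the data for $\Pic(f)$. It is a twisted variant, whose gluing maps are $f^*L_1\otimes f^*L_2^{-1}\to f^*L_1^{-1}\otimes f^*L_2$.

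Likewise, for $(A_1,\alpha,A_2)\in\tilde{\mathrm{Az}}(f^*)$ the splitting data are isomorphisms $\nu\colon f_U^*E_1\otimes (f_U^*E_2)^\vee\to (f_U^*E_1)^\vee\otimes f_U^*E_2$. Via $\tau_1,\tau_2$, the endomorphism algebras of source and target are $f_U^*A_1\otimes f_U^*A_2^{\mathrm{op}}$ and $f_U^*A_1^{\mathrm{op}}\otimes f_U^*A_2$. So $\nu$ is the natural splitting datum for a twisted identification $f^*A_1\otimes f^*A_2^{\mathrm{op}}\cong f^*A_1^{\mathrm{op}}\otimes f^*A_2$, not for an isomorphism $f^*A_1\cong f^*A_2$.

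\paragraph{Consequence for your construction.} If $\tilde\Delta$ is of this twisted type, your $\iota_S$ does not typecheck. A triple $(P,u,Q)$ with $u\colon A\otimes\mathrm{End}(P)\cong A'\otimes\mathrm{End}(Q)$ is not an element of $\tilde\Delta(A,A')$. Instead, $\Upsilon$ must manufacture a twisted datum from $\alpha$, for instance by combining $\alpha$ with $(\alpha^{-1})^{\mathrm{op}}$, the analogue of $\alpha\otimes\alpha^\vee$ for line bundles. You would then have to check three things:
\begin{enumerate}
\item the construction is well defined on the equivalence classes in $\Delta$;
\item it is multiplicative for $\otimes$;
\item it sends $\beta\alpha$ to the composite of the two twisted data in $\tilde{\mathrm{Az}}(X)$.
\end{enumerate}
The third check is relation (2), and it is not mere functoriality. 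It requires identifying the intermediate factor $f^*A_2\otimes f^*A_2^{\mathrm{op}}$ with $\mathrm{End}(f^*A_2)$.

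\paragraph{How to close the gap.} Take the actual definitions of $\tilde\Delta$ and $\tilde{\mathrm{Az}}(f^*)$ from \cite[\S~2]{VS} and build the comparison functor for those definitions. The obstacle you single out, well-definedness of composition on classes in $\Delta$, is just part of $\mathrm{Az}(S)$ being a category; the real work lies elsewhere.
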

\begin{proof}
    For the proof, we refer to \cite[\S~2.4]{VS}.
\end{proof}
\section{Relative Picard groups and Torsors}\label{relpictor}
This section comprises definitions regarding the relative Picard group, $fppf$ sheafification, and torsors. All the relevant definitions can be found in \cite[\S~4]{VS} over \'etale topology. Although one key thing to note in this section is the definition of the sheaf torsors, contrary to the presheaf torsors defined in \cite{VS}. 
\begin{defn}\cite[\S~4.4]{VS}\label{Picf}
  The relative $\mathrm{Pic}(f)$ is the abelian group generated by $[L_1, \alpha,L_2]$, where the $L_i$ are line bundles on $S$ and $\alpha: f^*L_1 \rightarrow f^*L_2$ is an isomorphism. The relations are:
  \begin{enumerate}
      \item $[L_1,\alpha,L_2] + [L_1',\alpha',L_2'] = [L_1 \otimes L_1', \alpha \otimes \alpha', L_2 \otimes L_2']$;
      \item $[L_1,\alpha,L_2] + [L_2,\beta,L_3] = [L_1, \beta\alpha, L_3]$;
      \item $[L_1, \alpha, L_2]=0$ if $\alpha = f^*(\alpha_0)$ for some $\alpha_0 : L_1 \cong L_2$.
  \end{enumerate}
\end{defn}
For more details and relevant properties, please refer to \cite[\S~4]{VS}.

Below, we define the relative Picard presheaf and its ${fppf}$ sheafification, where for any scheme $U \in S_{{fppf}}$,  $f_U: X \times_S U \to U$ will denote the base change map. 
\begin{defn}\label{presheafpicf}
The relative Picard presheaf $\mathrm{Pic}^f$ on $S_{{fppf}}$ assigns to each flat cover $U \to S$ the group $\mathrm{Pic}(f_U)$.
\end{defn}

Similarly, as it was defined in \cite[\S~4.5]{VS}, we introduce the  relative $\tilde{\Pic}(f)$ below.
\begin{defn}
    The abelian group generated by $[L_1, \alpha, L_2]$, where the $L_i$'s are line bundles on $S$, $\alpha : f^*L_1 \otimes f^*L_2^{-1} \rightarrow f^*L_1^{-1} \otimes f^*L_2$ is an isomorphism and relations are similar to $\Pic(f)$ is defined to be $\tilde{\Pic}(f)$.
\end{defn}
In accordance to Definition \ref{presheafpicf} we also define the preseheaf $\tilde{\Pic}^f$ to be the $fppf$-presheaf on $S_{fppf}$ such that $\tilde{\Pic}^f(U) = \tilde{\Pic}(f_U)$. We also recall the natural group homomorphism (see \cite[\S~4.5]{VS}) $$\psi: \tilde{\Pic}(f) \rightarrow \Pic(f),$$ that maps $[L_1, \alpha, L_2]$ to $[L_1 \otimes L_2^{-1}, \alpha, L_1^{-1}\otimes L_2]$.

\begin{prop}\label{picf}
 Let $f: X \to S$ be a faithful affine map. Let $\mathcal{I}_{{fppf}} := (f_*\mathcal{O}_X^\times / \mathcal{O}_S^\times)_{{fppf}}$ denote the ${fppf}$-sheafification of the relative unit quotient presheaf, and $\mathcal{P}ic^f_{{fppf}}$ the fppf sheafification of the presheaf $\mathrm{Pic}^f$. Then there is a natural isomorphism of ${fppf}$-sheaves:
$$\mathcal{P}ic^f_{{fppf}} \cong \mathcal{I}_{{fppf}}$$
 \end{prop}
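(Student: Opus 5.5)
We will build a natural map of presheaves $\phi\colon f_*\mathcal{O}_X^\times/\mathcal{O}_S^\times \to \mathrm{Pic}^f$ and show that it becomes an isomorphism after $fppf$-sheafification. For $U\in S_{fppf}$ with $X_U=X\times_S U$, a unit $u\in\Gamma(X_U,\mathcal{O}_{X_U}^\times)=(f_*\mathcal{O}_X^\times)(U)$ defines an automorphism of $f_U^*\mathcal{O}_U=\mathcal{O}_{X_U}$ given by multiplication by $u$. We set $\phi_U(u)=[\mathcal{O}_U,u,\mathcal{O}_U]\in\mathrm{Pic}(f_U)$. Relation (2) of Definition \ref{Picf} gives $\phi(uv)=\phi(u)+\phi(v)$. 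Relation (3) shows that $\phi$ kills the image of $\Gamma(U,\mathcal{O}_U^\times)$, since multiplication by a unit of $U$ is $f_U^*$ of an automorphism of $\mathcal{O}_U$. Hence $\phi$ descends to the quotient presheaf. It is clearly compatible with pullback along $V\to U$, so it is a morphism of presheaves and induces $\phi_{fppf}\colon\mathcal{I}_{fppf}\to\mathcal{P}ic^f_{fppf}$.

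To show $\phi_{fppf}$ is an isomorphism, it suffices to check that the presheaf map has kernel and cokernel whose sheafifications vanish. \emph{Local surjectivity.} Take a class $\xi\in\mathrm{Pic}(f_U)$. Using relations (1) and (2), we first reduce to a single generator $\xi=[L_1,\alpha,L_2]$, since an element of $\mathrm{Pic}(f_U)$ is a finite sum of generators and their negatives, and $-[L_1,\alpha,L_2]=[L_2,\alpha^{-1},L_1]$. Choose a Zariski (hence $fppf$) cover $\{U_i\to U\}$ trivialising both $L_1$ and $L_2$. On each $U_i$ fix trivialisations $\tau_j\colon L_j|_{U_i}\cong\mathcal{O}_{U_i}$. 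By relation (3),
\[
\xi|_{U_i}=[\mathcal{O},f^*\tau_2\circ\alpha\circ f^*\tau_1^{-1},\mathcal{O}]=\phi(u_i),
\]
where the middle map is multiplication by a unit $u_i\in\Gamma(X_{U_i},\mathcal{O}^\times)$. This requires $X_{U_i}=X\times_S U_i$ to be affine over $U_i$, which holds because $f$ is affine.

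\emph{Local injectivity.} Suppose $\phi_U(u)=0$ in $\mathrm{Pic}(f_U)$. We want to show that, locally on $U$, $u$ comes from $\mathcal{O}_U^\times$. This is the main obstacle, because vanishing in a Grothendieck-type group is not obviously a concrete condition. Our plan is to use the description from \cite[\S~4]{VS} (Sadhu's exact sequence)
\[
\mathcal{O}(U)^\times\to\mathcal{O}(X_U)^\times\to\mathrm{Pic}(f_U)\to\mathrm{Pic}(U)\to\mathrm{Pic}(X_U).
\]
That argument is formal once one has the relations together with the faithfulness $\mathcal{O}_U\hookrightarrow f_{U*}\mathcal{O}_{X_U}$, which is preserved by flat base change $U\to S$. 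Exactness at $\mathcal{O}(X_U)^\times$ says exactly that $\ker\phi_U=\mathcal{O}(U)^\times$, so the presheaf map is even injective, not just locally injective. If we wish to avoid citing this, the direct route is to build a homomorphism $\mathrm{Pic}(f_U)\to\mathrm{Pic}(U)$-type data, or better an inverse on generators. Concretely, define $\rho\colon\mathrm{Pic}(f_U)\to\mathcal{I}_{fppf}(U)$ by sending $[L_1,\alpha,L_2]$ to the local units $u_i$ constructed above. These are well defined modulo $\mathcal{O}_{U_i}^\times$ because changing the trivialisations multiplies $u_i$ by units of $U_i$, and on overlaps they glue to a section of the sheaf $\mathcal{I}_{fppf}$. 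One then checks that $\rho$ respects relations (1)--(3). Then $\rho\circ\phi=\mathrm{id}$, which gives injectivity.

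Finally, since the construction is natural in $U$, the map $\rho$ is a morphism of presheaves $\mathrm{Pic}^f\to\mathcal{I}_{fppf}$. By the universal property it factors through $\mathcal{P}ic^f_{fppf}$, and it is inverse to $\phi_{fppf}$. The only delicate verifications are the gluing of $\rho$, which uses that $\mathcal{I}_{fppf}$ is a sheaf, and the well-definedness under relation (2). We expect the latter to be a direct cocycle computation.
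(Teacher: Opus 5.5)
Your proof is correct and is essentially the paper's argument. Both rest on Sadhu's exact sequence $1\to\mathcal{O}(U)^\times\to\mathcal{O}(X_U)^\times\to\mathrm{Pic}(f_U)\to\mathrm{Pic}(U)$ together with local triviality of line bundles: the paper sheafifies the whole sequence and notes that $U\mapsto\mathrm{Pic}(U)$ sheafifies to zero, while you check local surjectivity by hand from relations (2)--(3), take injectivity from exactness at $\mathcal{O}(X_U)^\times$, and offer the inverse $\rho$ as an optional way to avoid the citation.
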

 \begin{proof} For any $U \in S_{{fppf}}$, the fundamental exact sequence of relative Picard groups (see \cite[equation~4.3]{VS}) yields the presheaf sequence 
 \begin{equation}\label{sespic}
1 \longrightarrow \mathcal{O}_S^\times(U) \longrightarrow f_*\mathcal{O}_X^\times(U) \longrightarrow \mathrm{Pic}(f_U) \longrightarrow \mathrm{Pic}(U) \longrightarrow \mathrm{Pic}(X_U).
 \end{equation}
 
 In the $S_{{fppf}}$ site, every line bundle $L \in \mathrm{Pic}(U)$ is locally trivial. Hence, the associated $fppf$ sheaf to the presheaf $U \mapsto \mathrm{Pic}(U)$ is the zero sheaf. Thus, if we apply the exact ${fppf}$-sheafification functor $(\cdot)_{{fppf}}$ to the presheaf sequence it recovers the canonical isomorphism $$\mathcal{P}ic^f_{{fppf}} \;\cong\; (f_*\mathcal{O}_X^\times / \mathcal{O}_S^\times)_{{fppf}} =: \mathcal{I}_{{fppf}}.$$ This completes the proof. 
 \end{proof}
 Let $U \to S$ be any ${fppf}$ cover and let $[L_1, \alpha, L_2] \in \tilde{\Pic}^f(U) = \tilde{\Pic}(f_U)$. Since $L_1$ and $L_2$ are line bundles on $U$, there exists an ${fppf}$ cover $\{V_i \to U\}_{i \in I}$ such that $L_1\vert{}_{V_i} \cong L_2\vert{}_{V_i} \cong \mathcal{O}_{V_i}$ for all $i \in I$. If we restrict the class $[L_1, \alpha, L_2]$ to $V_i$, the line bundles become trivial. Therefore, $f_{V_i}^* L_1 \otimes f_{V_i}^* L_2^{-1} \cong \mathcal{O}_{X \times_S V_i}$, which reduces the  the twisted isomorphism $\alpha\vert{}_{V_i}$ to an automorphism of $\mathcal{O}_{X \times_S V_i}$.

 With the above observation, we record the following proposition that connects  $\Pic^f, \tilde{\Pic}^f$ and $\mathcal{P}ic^f_{fppf}$.
\begin{prop}\label{allpic}
    Let $f: X \rightarrow S$ be a faithful affine map and let the $fppf$-sheafification of $\tilde{\Pic}^f$ be denoted as $\tilde{{Pic}}_{fppf}^f$. Then $\mathcal{P}ic^f_{fppf} \cong \tilde{{Pic}}^f_{fppf}$. 
    \end{prop}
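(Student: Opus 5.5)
The plan is to compare both sheaves with $\mathcal{I}_{fppf}$ via the natural map $\psi$. The presheaf morphism $\psi:\tilde{\Pic}^f\to\Pic^f$, given on sections over $U$ by $[L_1,\alpha,L_2]\mapsto[L_1\otimes L_2^{-1},\alpha,L_1^{-1}\otimes L_2]$, is compatible with base change along $V\to U$ in $S_{fppf}$. It therefore induces a morphism of $fppf$ sheaves $\psi_{fppf}:\tilde{Pic}^f_{fppf}\to\mathcal{P}ic^f_{fppf}$. I will show that $\psi_{fppf}$ is an isomorphism by checking that it is injective and surjective as a map of sheaves, i.e.\ locally on $fppf$ covers.

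First I build the analogue of the exact sequence \eqref{sespic} for $\tilde{\Pic}$. For each $U$ there should be a natural map $f_*\mathcal{O}_X^\times(U)\to\tilde{\Pic}(f_U)$, sending $u$ to $[\mathcal{O}_U,u,\mathcal{O}_U]$, where $u$ is viewed as an automorphism of $\mathcal{O}_{X_U}=f^*\mathcal{O}\otimes f^*\mathcal{O}^{-1}$. There is also a forgetful map $\tilde{\Pic}(f_U)\to\Pic(U)$, $[L_1,\alpha,L_2]\mapsto L_1\otimes L_2^{-1}$. The relation that kills triples with $\alpha$ pulled back from $S$ should make the first map factor through $f_*\mathcal{O}_X^\times(U)/\mathcal{O}_S^\times(U)$. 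By the observation just before the statement, every class in $\tilde{\Pic}(f_U)$ becomes, on a suitable $fppf$ cover $\{V_i\to U\}$ trivializing $L_1$ and $L_2$, the class of an automorphism of $\mathcal{O}_{X\times_S V_i}$. Hence after sheafification the map $(f_*\mathcal{O}_X^\times/\mathcal{O}_S^\times)\to\tilde{\Pic}^f$ becomes an epimorphism $\mathcal{I}_{fppf}\to\tilde{Pic}^f_{fppf}$. This is the same argument as in Proposition \ref{picf}, where the $\Pic(U)$ term sheafifies to zero.

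Next, the composite $\mathcal{I}_{fppf}\to\tilde{Pic}^f_{fppf}\xrightarrow{\psi_{fppf}}\mathcal{P}ic^f_{fppf}\cong\mathcal{I}_{fppf}$ needs to be computed on trivialized sections. On such a section it sends the class of $u$ to $[\mathcal{O},u,\mathcal{O}]$, which is the class of $u$ under Proposition \ref{picf}. So the composite is the identity, or at worst an explicitly computable endomorphism. If it is the identity, the first map is a monomorphism. Since it is also an epimorphism, it is an isomorphism, and then $\psi_{fppf}$ is an isomorphism as well.

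The main obstacle is injectivity of $\mathcal{I}_{fppf}\to\tilde{Pic}^f_{fppf}$. Equivalently, I must pin down precisely which unit classes die in $\tilde{\Pic}(f_U)$, which depends on how the relations in $\tilde{\Pic}(f)$ are transcribed ``similarly to $\Pic(f)$''. The identity-composite argument above sidesteps a direct kernel computation. So the delicate point is verifying that $\psi$ sends $[\mathcal{O},u,\mathcal{O}]$ to $[\mathcal{O},u,\mathcal{O}]$ and not to, say, $u^2$. If it were $u^2$, I would need a separate argument, for instance comparing with the multiplication-by-$2$ map, but I expect the identification $\mathcal{O}\otimes\mathcal{O}^{-1}\cong\mathcal{O}$ to give exactly $u$.
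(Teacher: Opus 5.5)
Your proposal is correct and rests on the same ingredients as the paper's proof: local trivialization of $L_1,L_2$ so that sections become $[\mathcal{O},u,\mathcal{O}]$, the fact from \eqref{sespic} that $u\mapsto[\mathcal{O},u,\mathcal{O}]\in\Pic(f_U)$ has kernel exactly the units pulled back from $U$, and the relation in $\tilde{\Pic}$ killing triples whose $\alpha$ is pulled back. The paper checks local injectivity and surjectivity of $\psi_{fppf}$ directly, whereas you pass through the auxiliary map $\mathcal{I}_{fppf}\to\tilde{Pic}^f_{fppf}$, $\bar u\mapsto[\mathcal{O},u,\mathcal{O}]$. The step you flag as delicate is immediate: $\psi$ reuses the same $\alpha$, so it sends $[\mathcal{O},u,\mathcal{O}]$ to $[\mathcal{O},u,\mathcal{O}]$, which the paper also uses, and your composite is exactly the isomorphism of Proposition \ref{picf} rather than squaring.
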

\begin{proof}

We are required to show that $\psi_{fppf} : \tilde{{Pic}}^f_{fppf} \to \mathcal{P}ic^f_{fppf}$, is an isomorphism of $fppf$-sheaves. Thus, it suffices to show local injectivity and surjectivity of $\psi_{fppf}$. 

To prove injectivity, it suffices to show that $x$ is locally zero where $U \in S_{fppf}$ and $x \in \tilde{{Pic}}^f_{fppf}(U)$ such that $\psi_{fppf}(x) = 0$.  Let us choose an $fppf$ cover $V \to U$, such that $x|_V$ is represented by $x_V=[L_1, \alpha, L_2] \in \tilde{\Pic}(f_V)$. Due to $L_1$ and $L_2$ being line bundles we immediately have $L_1 \cong L_2 \cong \mathcal{O}_V$. Thus, $x_V = [\mathcal{O}_V, u, \mathcal{O}_V]$ for some $u \in \Gamma(X_V,\mathcal{O}^{\times}_{X_V})$. Now, applying $\psi$ on $x_V$ we get $\psi(x_V) = [\mathcal{O}_V, u ,\mathcal{O}_V] \in \Pic(f_V)$. Since $\psi_{fppf}(x) = 0$ there exists an $fppf$ cover $W \to V$ such that $\psi(x_W) =0$ in the presheaf group $\Pic(f_W)$. Now we apply \cite[Equation~4.3]{VS} to the morphism $f_W: X_W \to W$, which ensures the exactness of the following sequence:
$$ 1 \to \Gamma(W, \mathcal{O}^{\times}_W) \to \Gamma(X_W, \mathcal{O}^{\times}_{X_W}) \to \Pic(f_W).$$ Triviality of $\psi(x_W)$ along with the above exact sequence concludes $u = f^*_W(v)$, for some $v \in \Gamma(W, \mathcal{O}^{\times}_W).$ Now, we already have $x_W = [\mathcal{O}_W, u, \mathcal{O}_W] \in \tilde{\Pic}(f_W)$, where the defining morphism $u : \mathcal{O}_{X_W} \rightarrow \mathcal{O}_{X_W}$, and $u = f^*_W(v)$, therefore the defining isomorphism is the pullback of $v: \mathcal{O}_W \xrightarrow{\sim} \mathcal{O}_W$. Thus, we obtain $[\mathcal{O}_W, u, \mathcal{O}_W]=0$ from the definition of $\tilde{\Pic}(f_W)$. Hence $x$ is locally zero in the $fppf$ topology. Therefore $\ker(\psi_{fppf}) = 0$. Concluding the injectivity.

Now, we proceed to prove local surjectivity. Let $y = [M_1,\beta,M_2] \in \Pic(f_U)$. For and $fppf$ cover $V' \to U$ we get $y|_{V'} = [\mathcal{O}_{V'}, u',\mathcal{O}_{V'}]$ for some $u' \in \mathcal{O}^{\times}_{X_V}$. But if we consider $y' = [\mathcal{O}_{V'}, u',\mathcal{O}_{V'}] \in \tilde{\Pic}(f_{V'})$, then by definition $\psi_{V'}(y') = y|_{V'}$. Hence proving local surjectivity of $\psi_{fppf}.$

\end{proof}
Before proceeding, we refer to \cite[\S~4.3]{VS} for the definition of presheaf torsors, which follows verbatim in any $fppf$ site. Below, we record the definition of sheaf torsors in $S_{{fppf}}$. 
\begin{defn}\cite{Milne}
Let $G$ be an abelian sheaf (sheaf of abelian groups) on the site $S_{{fppf}}$. A $G$-torsor on $S_{{fppf}}$ is a sheaf of sets $\mathcal{F}$ on $S_{{fppf}}$ equipped with a left action $\rho: G \times \mathcal{F} \to \mathcal{F}$ such that:
\begin{enumerate}
\item If $U \in S_{{fppf}}$, and $\mathcal{F}(U) \neq \emptyset$, then the action $G(U) \times \mathcal{F}(U) \to \mathcal{F}(U)$ is simply transitive;

\item for every $U \in S_{fppf}$ there exists an ${fppf}$ cover $\{U_i \to U\}_{i \in I}$ such that $\mathcal{F}(U_i) \neq \emptyset$ for all $i \in I$.
\end{enumerate}
\end{defn}
We will denote $\mathrm{Tor}_{fppf}(S,G)$ as the collection of $G$-torsors. Also, please note that even though torsors can be defined over non-abelian sheaves, in our context, we adhere to sheaves of abelian groups. For more results regarding torsors, we refer to \cite{Milne}, \cite[\S~4.3]{VS}.
\begin{thm}\cite{Milne}\label{isoG}
 Isomorphism classes of $G$-torsors over $S_{{fppf}}$, $\left(\mathrm{Tor}_{fppf}(S,G)/\simeq\right)$ are in canonical $1$-to-$1$ correspondence with elements of the first ${fppf}$ cohomology group $H^1_{{fppf}}(S, G)$ and since $G$ is an abelian sheaf, $\left(\mathrm{Tor}_{fppf}(S,G)/\simeq \right) \cong H^1(S,G)$ as an abelian group.
 \end{thm}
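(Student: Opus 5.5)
The statement is the standard torsor–cohomology correspondence for abelian sheaves, specialised to the site $S_{fppf}$. My plan is to construct maps in both directions and compare them through Čech cohomology. The one-dimensional case of the Čech-to-derived comparison is valid on any site, so the lack of a Čech/derived isomorphism in higher degrees does not get in the way.

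\textbf{Step 1: from torsors to cocycles.} Let $\mathcal{F}$ be a $G$-torsor. By axiom (2) there is an fppf cover $\mathcal{U}=\{U_i\to S\}$ with sections $s_i\in\mathcal{F}(U_i)$. By axiom (1), on each $U_{ij}=U_i\times_S U_j$ there is a unique $g_{ij}\in G(U_{ij})$ with $s_j|_{U_{ij}} = g_{ij}\cdot s_i|_{U_{ij}}$.
\begin{itemize}
\item Uniqueness forces the cocycle identity $g_{ik}=g_{jk}+g_{ij}$ on $U_{ijk}$.
\item Changing the chosen sections changes $(g_{ij})$ by a coboundary.
\item Refining the cover is compatible with the refinement maps.
\end{itemize}
This gives a well-defined class in $\check{H}^1(S,G)=\varinjlim_{\mathcal{U}}\check{H}^1(\mathcal{U},G)$. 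Isomorphic torsors give the same class.

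\textbf{Step 2: from cocycles to torsors.} Given a cocycle $(g_{ij})$ on $\mathcal{U}$, glue the trivial torsors $G|_{U_i}$ along the translations by $g_{ij}$. Gluing works because $G$ is a sheaf and the descent datum satisfies the cocycle condition; on $S_{fppf}$ this is just gluing of sheaves along a covering, not descent of schemes. The result is a sheaf $\mathcal{F}$ with a $G$-action. It is locally trivial by construction, and the action is simply transitive wherever $\mathcal{F}(U)\neq\emptyset$, since $\mathcal{F}|_U\cong G|_U$ once a section exists. Checking that Steps 1 and 2 are mutually inverse on isomorphism classes and on $\check{H}^1$ is routine.

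\textbf{Step 3: Čech to derived cohomology.} I would invoke the general fact, valid on any site, that $\check{H}^1(S,G)\to H^1(S,G)$ is an isomorphism. The proof I have in mind embeds $G\hookrightarrow I$ with $I$ injective and sets $Q=I/G$.
\begin{itemize}
\item Since $H^1(S,I)=0$, derived cohomology gives $H^1(S,G)=\mathrm{coker}(I(S)\to Q(S))$.
\item A section $q\in Q(S)$ lifts locally to $I$. The differences of the local lifts on overlaps lie in $G$ and form a Čech $1$-cocycle, and this gives the inverse map.
\end{itemize}
Equivalently, one can use the Čech-to-derived spectral sequence together with $\check{H}^0(S,\underline{H}^1(G))=0$, since derived classes die locally. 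I expect this to be the main step requiring care. The paper has stressed that Čech and fppf cohomology may differ, so I would emphasise explicitly that only degree $1$ is used.

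\textbf{Step 4: group structures.} For abelian $G$, the sum of torsors is the contracted product $\mathcal{F}\wedge^G\mathcal{F}'$, the sheafification of $(\mathcal{F}\times\mathcal{F}')/G$ with $G$ acting antidiagonally. Choosing trivialising sections on a common refinement, its cocycle is $g_{ij}+g'_{ij}$. Hence the bijection of Steps 1–3 is an isomorphism of abelian groups, with the trivial torsor $G$ as the identity.
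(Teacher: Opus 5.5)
Your proposal is correct. It is the standard argument (torsors $\leftrightarrow$ \v{C}ech $\check{H}^1$ by gluing, then $\check{H}^1\cong H^1$ on an arbitrary site, with the contracted product matching the sum of cocycles) behind the classification theorem that the paper simply cites from Milne, so it takes essentially the same route, and you are right that only degree $1$ is needed, where \v{C}ech and derived cohomology always agree on $S_{fppf}$.
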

 \begin{proof} This is the standard classification theorem for torsors on Grothendieck sites (see \cite[Chapter III, Theorem 4.3]{Milne}).
 \end{proof}

\section {Construction of the Homomorphism $\theta_{{fppf}}$}\label{theta}

To briefly revisit the construction of the map $\theta_{\text{et}}$ in \cite[\S~ 4]{VS} we find that the author first sends an element $\mathcal A:= [(A_1, \alpha, A_2)] \in \tilde{\Br}(f)$ to the class $[G_{\mathcal{A}}]$ of a presheaf torsor over the site $S_{\text{et}}$. Then use the isomorphism of \'{e}tale and $\check{cech}$ cohomology to establish the rest of the proof. But at the  core of the proof there is a significant observation which we briefly state here for recalling. Since over $S_{\text{et}}$, an Azumaya algebra is locally isomorphic to the endomorphism algebra of a non-trivial locally free module, he fixes $A_i\vert{}_U \cong \text{End}(E_i)$ for $i=1,2$ (see \cite[\S~4.1]{VS}). Now, when matching two local trivializations $(E_1, \tau_1)$ and $(E_2, \tau_2)$, the isomorphism $\alpha : f^*A_1 \cong f^*A_2$ produces a pairing of the form \cite[\S~4.2]{VS}: $$\nu : f_U^*E_1 \otimes (f_U^*E_2)^\vee \xrightarrow{\sim} (f_U^*E_1)^\vee \otimes f_U^*E_2 .$$ One key thing to notice is the appearance of dual sheaves $(\dots)^\vee$. Under a local change of vector bundles $(E_1 \otimes L_1, E_2 \otimes L_2)$, this isomorphism picks up an action of $f_U^*L_1 \otimes f_U^*L_2^{-1}$ on the left and $f_U^*L_1^{-1} \otimes f_U^*L_2$ on the right. To turn this into a well-defined action by the standard relative Picard group $\text{Pic}(f)$, Sadhu introduces the auxiliary group $\tilde{\text{Pic}}({f})$ and its presheaf analogue $\tilde{\text{Pic}}^{f}$ in \cite[\S~4.5]{VS}, which tracks isomorphisms $f^*L_1 \otimes f^*L_2^{-1} \to f^*L_1^{-1} \otimes f^*L_2$, along with a natural homomorphism $\Psi: \tilde{\text{Pic}}({f}) \to \text{Pic}(f)$ \cite[\S~4.5]{VS} to correct for the dualities. 
\begin{rmk}
Shifting the above theory over $S_{\text{fppf}}$ does not follow verbatim, even if an \'{e}tale cover is always an $fppf$ cover by definition. It would be of importance to point out that merely mimicking the approach taken to prove \cite[Theorem~4.12]{VS} will not be feasible as $(f_*\mathcal{O}^{\times}_X/\mathcal{O}^{\times}_S)$ need not be smooth, resulting in non-isomorphic $\check{c}ech$ and $fppf$ first cohomology. 
\end{rmk}
Now, we proceed to construct the natural group homomorphism $$\theta_{{fppf}}: \mathrm{Br}(f) \longrightarrow H^1_{{fppf}}\!\left(S, (f_*\mathcal{O}_X^\times / \mathcal{O}_S^\times)_{{fppf}}\right)$$for any faithful affine map $f: X \to S$ of noetherian schemes by completely bypassing the $\check{c}ech-fppf$ cohomological computation and directly introducing sheaf torsors. In short, what we would like to show is the following: 

By Proposition \ref{picf}, there is a canonical isomorphism of ${fppf}$-sheaves $\mathcal{P}ic^f_{{fppf}} \cong (f_*\mathcal{O}_X^\times / \mathcal{O}_S^\times)_{{fppf}}$. Moreover, by Theorem \ref{isoG}, elements of $H^1_{{fppf}}(S, \mathcal{I}_{{fppf}})$ are in one-to-one correspondence with isomorphism classes of $\mathcal{P}ic^f_{{fppf}}$-torsors over $S_{{fppf}}$. Therefore, the construction of $\theta_{{fppf}}$ reduces to associating an isomorphism class of  $\mathcal{P}ic^f_{{fppf}}$-torsor to a relative Azumaya triple, and establishing that an assignment as such is a canonical group homomorphism. 

\begin{lem}\label{trivial}
Over an fppf cover $U \to S$, $A_1\vert{}_U$ and $A_2\vert{}_U$ become isomorphic to standard matrix algebras over free modules, i.e. $$A_i\vert{}_U \cong \text{End}(\mathcal{O}_U^{\oplus n_i}) \cong \mathcal{M}_{n_i}(\mathcal{O}_U).$$ 
\end{lem}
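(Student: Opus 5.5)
Given Azumaya algebras $A_1, A_2$ on $S$, the plan is to produce a single fppf (indeed étale) cover of $S$ on which both become split matrix algebras over free modules. The argument rests on two standard facts: the splitting of Azumaya algebras étale locally, and the triviality of locally free modules Zariski locally. Since every étale covering is an fppf covering and $S_{fppf}$ is closed under refinement and composition of coverings, nothing beyond these two facts is needed.

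\emph{Step 1: the rank is locally constant.} Since $A_i$ is locally free of finite rank, its rank is locally constant on $S$. The noetherian scheme $S$ has finitely many connected components, so we may work over a Zariski open cover on each piece of which $A_i$ has constant rank $r_i$. Using $A_i \otimes A_i^{\mathrm{op}} \cong \mathrm{End}(A_i)$ and checking on geometric fibers, where $A_i$ becomes a central simple algebra over an algebraically closed field, we get $r_i = n_i^2$.

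\emph{Step 2: étale-local splitting.} Here I would invoke the standard theorem (Grothendieck, Brauer I; see \cite{Milne}, Chapter IV, Proposition 2.1) that an $\mathcal{O}_S$-algebra is Azumaya if and only if there is an étale covering $\{U_j \to S\}$ with $A\vert_{U_j} \cong \mathcal{M}_{n}(\mathcal{O}_{U_j})$. One way to see this is that the scheme of isomorphisms $\mathrm{Isom}(\mathcal{M}_n, A)$ is a $\mathrm{PGL}_n$-torsor, and $\mathrm{PGL}_n$ is smooth, so the torsor has sections étale locally. Apply this to $A_1$ and $A_2$ separately, obtaining étale covers $\{U^{(1)}_j\}$ and $\{U^{(2)}_k\}$.

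\emph{Step 3: a common refinement.} Take the fiber products $U_{jk} = U^{(1)}_j \times_S U^{(2)}_k$. These form an étale, hence fppf, covering of $S$. Pulling back the isomorphisms from Step 2 gives $A_i\vert_{U_{jk}} \cong \mathcal{M}_{n_i}(\mathcal{O}_{U_{jk}}) = \mathrm{End}(\mathcal{O}^{\oplus n_i})$. If one wants a single $U \to S$, take $U = \coprod U_{jk}$. Finitely many pieces suffice after a quasi-compactness reduction, since $S$ is noetherian and étale maps are open, so $U \to S$ is finitely presented and faithfully flat.

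\emph{Main obstacle.} The substantive step is Step 2, namely that splitting happens étale (or fppf) locally with a \emph{free} module rather than merely a locally free $E$ with $A \cong \mathrm{End}(E)$. If one only has $A\vert_U \cong \mathrm{End}(E)$, as in \cite[\S~4.1]{VS}, I would first refine $U$ by a Zariski cover trivializing $E$, which gives $\mathrm{End}(E) \cong \mathcal{M}_{n}$. This refinement is harmless, because Zariski covers are fppf covers.
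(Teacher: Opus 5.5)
The paper gives no proof of this lemma; it is stated as a standard fact. The reasoning it implicitly relies on, visible in the paragraph recalling \cite[\S~4.1]{VS} just before the lemma, is exactly your last paragraph: étale-local splitting $A_i|_U\cong\mathrm{End}(E_i)$, followed by a Zariski trivialization of $E_i$. So your proposal is correct and supplies the standard argument the paper omits. You apply the étale splitting theorem you cite from \cite{Milne} to each $A_i$, take a common refinement by fibre products, and use that étale and Zariski coverings are fppf coverings.

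Two small points need tightening. First, your parenthetical $\mathrm{PGL}_n$ justification is circular as phrased. Calling $\mathrm{Isom}(\mathcal{M}_n,A)$ a torsor already presupposes that it acquires sections fppf-locally, which is the statement at issue. To make it independent of the citation, you would have to show directly that $\mathrm{Isom}(\mathcal{M}_n,A)\to S$ is surjective and smooth. Surjectivity follows from Wedderburn on geometric fibres. Smoothness holds because isomorphisms lift over square-zero thickenings, since the Hochschild cohomology of a separable algebra vanishes in positive degrees. Otherwise, simply rest on the citation, whose proof goes through strict henselizations.

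Second, when you form a single $U=\coprod U_{jk}$, the pieces need not be quasi-compact, since étale morphisms are only locally of finite presentation. First shrink to finitely many affine opens of the $U_{jk}$ whose images cover $S$. A finite disjoint union of affine schemes étale over the noetherian (hence quasi-separated) $S$ is then of finite presentation and surjective. It is therefore an object of $S_{fppf}$ in the paper's faithfully flat sense.

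Step~1 is harmless but not needed, since Step~2 already produces the local integers $n_i$.
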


Below we briefly recall $\mathcal{F}_A$ and  $G_A$ for $fppf$-morphisms as in \cite[\S~4.2]{VS}.
\begin{defn}
For an Azumaya algebra $A \in \mathrm{Az}(S)$, let $\mathcal{F}_A$ denote the fiber category over $S_{{fppf}}$ whose objects are pairs $(E, \tau)$, where $E$ is a locally free $\mathcal{O}_U$-module of finite rank and $\tau: \mathrm{End}_{\mathcal{O}_U}(E) \xrightarrow{\sim} A\vert{}_U$ is an isomorphism of $\mathcal{O}_U$-algebras. Morphisms $g: (E, \tau) \to (E', \tau')$ are isomorphisms of $\mathcal{O}_U$-modules $g: E \xrightarrow{\sim} E'$ such that $\tau'(g \cdot g^{-1}) = \tau$.
\end{defn}

\begin{defn}
Let $A = (A_1, \alpha, A_2) \in \tilde{\mathrm{Az}}(f^*)$ (or in $\mathrm{Az}(f^*))$ be a relative Azumaya triple, where $\alpha: f^*A_1 \xrightarrow{\sim} f^*A_2$ is an isomorphism in $\mathrm{Az}(X)$ then the relative splitting category $G_A$ over $S_{{fppf}}$ associated to $A = (A_1, \alpha, A_2)$ is defined as follows: For every $fppf$ map $U \to S$, objects of $G_A(U)$ are triples $\left((E_1, \tau_1), \nu, (E_2, \tau_2)\right)$ where $(E_1, \tau_1) \in \mathcal{F}_{A_1}(U)$ and $(E_2, \tau_2) \in \mathcal{F}_{A_2}(U)$; $\nu: f_U^* E_1 \otimes (f_U^* E_2)^\vee \xrightarrow{\sim} (f_U^* E_1)^\vee \otimes f_U^* E_2$ is an isomorphism of $\mathcal{O}_{X\times_S U}$-modules. Morphisms are defined to be pairs of vector bundle isomorphisms $(g_1, g_2)$, where $g_1: E_1 \xrightarrow{\sim} E_1'$ and $g_2: E_2 \xrightarrow{\sim} E_2'$ commute with $(\tau_1, \tau_1')$ and $(\tau_2, \tau_2')$ respectively, and satisfy $(f_U^*g_1^\vee \otimes f_U^*g_2) \circ \nu = \nu' \circ (f_U^*g_1 \otimes f_U^*g_2^\vee)$. 
\end{defn}
For a detailed discussion on $\mathcal{F}_A$ and $G_A$ we refer to \cite[\S~4.1,4.2]{VS}. We also note the following proposition before we give an explicit proof of our desired theorem and stick to the notation $[G_A(U)]$ that denotes the set of all isomorphism classes of objects of $G_A(U)$. Hence, assigning $U \mapsto [G_A(U)]$ remains a presheaf on $S_{fppf}$ denoted by $[G_A]$, where the restriction maps are the pullbacks. 
\begin{prop}\label{pretosheaftor}
    Let $A:= (A_1,\alpha, A_2) \in \tilde{\mathrm{Az}}(f^*)$ (or in $\mathrm{Az}(f^*)$) and $S$ is connected. Then $[G_A]$ is a $\tilde{Pic}^f$-torsor. Also, if we denote $[G_A]^+$ to be the $fppf$ sheafification of $[G_A]$, then $[G_A]^+$ is a $\mathcal{P}ic^f_{fppf}$-torsor. For a general noetherian scheme $S$, the argument follows verbatim componentwise.
\end{prop}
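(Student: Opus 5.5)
First I define the action of $\tilde{\Pic}^f$ on $[G_A]$. For $U\in S_{fppf}$, take $x=[L_1,\beta,L_2]\in\tilde{\Pic}(f_U)$ and $\xi=[((E_1,\tau_1),\nu,(E_2,\tau_2))]\in[G_A(U)]$. Set
\[
x\cdot\xi=\bigl[((E_1\otimes L_1,\tau_1'),\ \nu\otimes\beta,\ (E_2\otimes L_2,\tau_2'))\bigr].
\]
Here $\tau_i'$ is $\tau_i$ composed with the canonical isomorphism $\mathrm{End}(E_i\otimes L_i)\cong\mathrm{End}(E_i)$. The map $\nu\otimes\beta$ goes from $f_U^*E_1\otimes f_U^*L_1\otimes (f_U^*E_2\otimes f_U^*L_2)^\vee$ to the corresponding dual expression; this is exactly the twisting that $\tilde{\Pic}$ was built to track. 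I would check the following, following \cite[\S~4.2--4.5]{VS}. The action is well defined on isomorphism classes, and it respects the relations of $\tilde{\Pic}(f_U)$. For relation (3), note that a pullback $f^*(\beta_0)$ is absorbed by the morphism $(1\otimes\beta_0,1)$ in $G_A(U)$. Finally, the action commutes with pullback along $V\to U$, so it is an action of presheaves.

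Next I prove that $[G_A]$ is a presheaf torsor. For local nonemptiness, Lemma \ref{trivial} gives a cover $U\to S$ on which $A_i|_U\cong \mathrm{End}(\mathcal{O}_U^{n_i})$. Pulling back along $f_U$, $\alpha$ becomes an isomorphism of matrix algebras over $X_U$. After refining, it is induced by an isomorphism of modules, using Skolem--Noether locally on the affine $X_U$. This produces a $\nu$, so $[G_A(U)]\neq\emptyset$. Connectedness of $S$ is used here so that the ranks $n_i$ are constant; on a general $S$ I would run the same argument componentwise, as the statement says. For simple transitivity, take two objects over $U$. Two splittings $(E_i,\tau_i)$ and $(E_i',\tau_i')$ of the same Azumaya algebra differ by a unique line bundle, $E_i'\cong E_i\otimes L_i$ with $L_i=\mathcal{H}om_A(E_i,E_i')$. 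The two $\nu$'s then differ by an isomorphism $f^*L_1\otimes f^*L_2^{-1}\to f^*L_1^{-1}\otimes f^*L_2$. This gives transitivity. Freeness comes from tracing the same identifications backwards: if $x\cdot\xi\cong\xi$, the isomorphism yields $\beta_0$ with $\beta=f^*\beta_0$, so $x=0$. This is the presheaf statement, verbatim from \cite[\S~4]{VS}.

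Finally I pass to sheaves. Sheafification is exact and commutes with finite products, so the action map $\tilde{\Pic}^f\times[G_A]\to[G_A]$ sheafifies to an action of $\tilde{Pic}^f_{fppf}$ on $[G_A]^+$. By Proposition \ref{allpic} this is the same as an action of $\mathcal{P}ic^f_{fppf}$. Local nonemptiness passes to $[G_A]^+$ immediately. The step I expect to be the main obstacle is simple transitivity for $[G_A]^+(U)$ when $[G_A]^+(U)\neq\emptyset$. I would prove it in two parts. First I show that the map $\tilde{\Pic}^f\times[G_A]\to[G_A]\times[G_A]$, $(x,\xi)\mapsto(x\xi,\xi)$, is a bijection of presheaves; this is the presheaf torsor property just proved. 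Since sheafification preserves isomorphisms and finite products, the map $\tilde{Pic}^f_{fppf}\times[G_A]^+\to[G_A]^+\times[G_A]^+$ is then an isomorphism of sheaves. Second, I evaluate this isomorphism on sections over $U$, which gives simple transitivity whenever $[G_A]^+(U)$ is nonempty. The one technical point I must be careful about is whether the plus construction needs to be applied twice. I would handle it by citing exactness of sheafification rather than working with sections directly.
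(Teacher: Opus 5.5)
Your overall route is the paper's. You take the presheaf statement from \cite[Proposition~4.3]{VS}, sheafify, and identify $\tilde{Pic}^f_{fppf}$ with $\mathcal{P}ic^f_{fppf}$ by Proposition~\ref{allpic}. Your sheafification step is correct and more explicit than the paper's, which appeals to exactness of sheafification for abelian presheaves although $[G_A]$ is only a presheaf of sets. The presheaf bijection $(x,\xi)\mapsto(x\xi,\xi)$, together with the fact that sheafification preserves finite products and isomorphisms, gives simple transitivity on $[G_A]^+$, and local nonemptiness is immediate.

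The step that fails is the one presheaf-level verification you supply yourself, for relation (3).
\begin{itemize}
\item \emph{Your morphism has the wrong type.} The map $1\otimes\beta_0$ goes from $E_1\otimes L_1\otimes L_2^{-1}$ to $E_1\otimes L_1^{-1}\otimes L_2$. So $(1\otimes\beta_0,1)$ is not a morphism $x\cdot\xi\to\xi$, and in general no such morphism exists.
\item \emph{Counterexample with a nontrivial line bundle.} Take $L_1=L_2=L$ with $L\not\cong\mathcal{O}_U$, and $\beta=f_U^*(\beta_0)$ for the canonical $\beta_0\colon L\otimes L^{-1}\cong L^{-1}\otimes L$. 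Then $x=0$ in $\tilde{\Pic}(f_U)$. But an isomorphism $(E_1\otimes L,\tau_1')\to(E_1,\tau_1)$ in $\mathcal{F}_{A_1}(U)$ would be a nowhere-vanishing section of $\mathcal{H}om_{A_1|_U}(E_1\otimes L,E_1)\cong L^{-1}$, which does not exist.
\item \emph{Failure even for trivial bundles.} Take $L_1=L_2=\mathcal{O}_U$. The automorphisms of $(E_i,\tau_i)$ are scalars $c_i\in\mathcal{O}(U)^\times$, and they rescale $\nu$ by $(c_2/c_1)^{\pm2}$. So the zero class $[\mathcal{O}_U,f_U^*c,\mathcal{O}_U]$ fixes $\xi$ only when $c$ is a square.
\end{itemize}
Thus tensoring defines an action of the group $\mathcal{T}(U)$ of isomorphism classes of triples $(L_1,\beta,L_2)$, not of its quotient $\tilde{\Pic}(f_U)$. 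Your sketch therefore does not establish the presheaf statement you attribute to \cite{VS}.

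There are two ways out. You can use that statement as a black box, as the paper does. Alternatively, run your argument with $\mathcal{T}$ in place of $\tilde{\Pic}^f$. The map $\mathcal{T}\to\tilde{\Pic}^f$ is surjective, and its kernel dies fppf-locally, for three reasons:
\begin{itemize}
\item line bundles trivialize locally;
\item the argument of Proposition~\ref{allpic} makes the remaining unit a pullback $f^*v$;
\item $v$ becomes a square on the finite free cover given by $T^2=v$.
\end{itemize}
Hence $\mathcal{T}$ and $\tilde{\Pic}^f$ have the same fppf sheafification, and your sheaf-level paragraph then applies unchanged.
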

\begin{proof}
    First implication is \cite[Proposition~4.3]{VS}. The latter can be canonically established.
     Since sheafification is exact for abelian presheaves and it preserves local surjectivity, the action that is simply transitive and locally non-empty, which defines the presheaf torsor descend to a simply transitive action of the associated sheaf. Hence  $[G_A]^+$ as a $\tilde{Pic}^f_{fppf}$-torsor. Finally, applying Proposition \ref{allpic}, we conclude.
\end{proof}

Now, we proceed to prove Theorem \ref{thetafp}

\begin{proof} Recall that $\mathrm{\tilde{Br}}(f):= K_0(\mathrm{\tilde{Az}}(f^*))$ and let $A:= (A_1, \alpha, A_2) \in \mathrm{\tilde{Az}}(f^*))$. Then the combination of Proposition \ref{picf}, \ref{allpic},  \ref{pretosheaftor}, and Theorem \ref{isoG}, establishes that the isomorphism class of $[G_A]^+$ denoted as $[\mathcal{G}_A] \in H^1_{fppf}(S,f_*\mathcal{O}^{\times}_X/\mathcal{O}^{\times}_S)$. Using \cite[Lemma 4.5]{VS} it's clear that for an isomorphism $\phi: A \to B$ in $\mathrm{\tilde{Az}}(f^*)$ we readily have $[\mathcal{G}_A] = [\mathcal{G}_B]$. Now, \cite[Lemma 4.7(1)]{VS} that produces $[G_{A\otimes A'}] \cong [G_A]\Pi^{\tilde{\Pic}^f}[G_{A'}]$ as presheaf torsors holds true after sheafification as well, i.e.  $[\mathcal{G}_{A\otimes A'}] \cong [\mathcal{G}_A]\Pi^{\tilde{\Pic}^f}[\mathcal{G}_{A'}]$ as $\mathcal{P}ic^f_{fppf}$-torsors. Since $\mathcal{P}ic^f_{fppf}$ is an abelian sheaf we automatically get $[\mathcal{G}_{A\otimes A'}] = [\mathcal{G}_A]+[\mathcal{G}_{A'}]$ in $H^1_{fppf}(S,f_*\mathcal{O}^{\times}_X/\mathcal{O}^{\times}_S)$, as the contracted product corresponds to the addition of their cohomology classes. Now, in a similar fashion from \cite[Lemma 4.7(2)]{VS} we deduce that $[\mathcal{G}_{B\circ A}] = [\mathcal{G}_A]+[\mathcal{G}_B]$. Hence, by assigning $A \mapsto [\mathcal{G}_A]$ we get a well defined group homomorphism $$\omega_{fppf}: \tilde{\Br}(f) \rightarrow H^1_{fppf}(S,f_*\mathcal{O}^{\times}_X/\mathcal{O}^{\times}_S).$$ Finally composing $\omega_{fppf}$ with $\Upsilon$ (see Lemma \ref{BrtoBr}) we get our desired group homomorphism $$\theta_{fppf}: \Br(f) \rightarrow  H^1_{fppf}(S,f_*\mathcal{O}^{\times}_X/\mathcal{O}^{\times}_S).$$

\end{proof}

\section{Kummer Sequence in fppf topology}\label{kummer}
 In \cite[\S~6]{VS}, the author gave an explicit description of a relative Kummer exact sequence for faithful finite maps $f: X \to S$. In the classical \'etale topology, the sequence takes the form (see \cite[Proposition~6.2]{VS} $$0 \longrightarrow \mu_{n}^{f} \longrightarrow \mathcal{I}_{\text{et}} \xrightarrow{\ [n] \ } \mathcal{I}_{\text{et}} \longrightarrow 0$$ where $\mathcal{I}_{\text{et}} := (f_* \mathcal{O}_X^\times / \mathcal{O}_S^\times)_{\text{et}}$ is the relative units sheaf and $\mu_{n}^{f}$ is defined as the kernel sheaf of the $n$-th power multiplication map $[n]: \mathcal{I}_{\text{et}} \to \mathcal{I}_{\text{et}}$. The requirement that $n$ is not divisible by the $\text{char}(k(s)))$ for any $s \in S$ is a direct consequence of working within the \'etale site $S_{\text{et}}$. For the multiplication map $[n]$ to be an epimorphism of sheaves, every section of $\mathcal{I}_{\text{et}}$ over an open set $U \subseteq S$ must locally admit an $n$-th root in the Grothendieck topology. Adjoining an $n$-th root of a local unit $u \in \mathcal{O}^\times(X_U)$ requires passing to the algebra extension $R' = \mathcal{O}(X_U)[T]/(T^n - u)$. For $\text{Spec}(R') \to \text{Spec}(\mathcal{O}(X_U))$ to be a valid cover in $S_{\text{et}}$, the extension must be unramified. Since the derivative of $P(T) = T^n - u$ is $P'(T) = n T^{n-1}$, the extension is \'etale if and only if $P'(T)$ is invertible in $R'$, which necessitates $n \in \mathcal{O}(X_U)^\times$. If $\text{char}(k(s))$ divides $n$, the map $[n]$ ceases to be unramified, breaking the local surjectivity of $[n]$ in $S_{\text{et}}$. This characteristic constraint is not an inherent limitation of relative units, but rather a consequence of the \'etale site's prohibition of ramified and infinitesimal structures.
 
 Let $f: X \to S$ be a faithfully flat and finite map of schemes and $\mu_{n}^{f} := \ker([n]: \mathcal{I}_{\text{fppf}} \to \mathcal{I}_{\text{fppf}})$. Then we recover a characteristic free statement similar to  \cite[Proposition~6.2]{VS}, which we record below.
  
  \begin{thm}\label{relkum}
  For any faithfully flat finite map $f: X \to S$, and any integer $n \ge 1$, the sequence of fppf sheaves:$$0 \longrightarrow \mu_{n}^{f} \longrightarrow \mathcal{I}_{\text{fppf}} \xrightarrow{\ [n]\ } \mathcal{I}_{\text{fppf}} \longrightarrow 0$$is short exact in $S_{\text{fppf}}$, holding universally without any restriction on the characteristic of $k(S)$.
  \end{thm}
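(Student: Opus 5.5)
Exactness on the left and in the middle is formal: $\mu_n^f$ is \emph{defined} as the kernel sheaf of $[n]\colon \mathcal{I}_{\text{fppf}} \to \mathcal{I}_{\text{fppf}}$. Moreover, $[n]$ is a homomorphism of abelian sheaves because $\mathcal{I}_{\text{fppf}}$ is the sheafification of an abelian presheaf and sheafification is exact. So the whole content of Theorem \ref{relkum} is that $[n]$ is an epimorphism of fppf sheaves. Concretely, for every $U \in S_{\text{fppf}}$ and every section $s \in \mathcal{I}_{\text{fppf}}(U)$, we must find an fppf cover $\{V_j \to U\}$ with $s|_{V_j} \in n\cdot \mathcal{I}_{\text{fppf}}(V_j)$.

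The plan is to reduce to the presheaf level and then extract roots by a finite flat base change of $U$. First, since $\mathcal{I}_{\text{fppf}}$ is the sheafification of $U \mapsto f_*\mathcal{O}_X^\times(U)/\mathcal{O}_S^\times(U)$, any section $s$ is, after refining by an fppf cover (and further by Zariski affine opens, since $S$ is noetherian), represented by the class $\bar u$ of a unit $u \in \Gamma(X_V, \mathcal{O}^\times_{X_V})$ with $V = \mathrm{Spec}(R)$ affine. Because $f$ is affine and finite, $X_V = \mathrm{Spec}(B)$ with $B$ a finite $R$-algebra. Since $f$ is also faithfully flat, and finite over a noetherian base, $B$ is a finite locally free $R$-module of some rank. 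Zariski localizing on $V$ further, we may assume $B$ is free of constant rank $d \geq 1$.

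The key step is to descend the $n$-th root problem from $B$ to $R$. Adjoining $T^n - u$ over $B$ gives a finite flat cover of $X_V$, but not of $V$. So consider instead the Weil restriction along the finite free map $R \to B$:
$$W := \mathrm{Res}_{B/R}\bigl(\mathrm{Spec}\, B[T]/(T^n - u)\bigr).$$
Since $B[T]/(T^n-u)$ is finite free over $B$ of rank $n$, $W$ is representable by an affine $R$-scheme of finite presentation. Choosing an $R$-basis $e_1,\dots,e_d$ of $B$ and writing $T = \sum x_i e_i$, the equation $T^n = u$ becomes $d$ polynomial equations in the $d$ variables $x_1,\dots,x_d$ over $R$. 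The universal point over $W$ gives a $T \in B\otimes_R \mathcal{O}(W)$ with $T^n = u$, hence $\bar u = n\cdot \overline{T}$ in $\mathcal{I}(W)$, where $T$ is automatically a unit since $u$ is. It then remains to show that $W \to V$ is faithfully flat, after which $W$ (or a cover of it) is an fppf refinement on which $s$ becomes an $n$-th multiple.

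I expect this flatness and surjectivity to be the main obstacle. Surjectivity follows by checking fibers: over a point $\mathfrak{p} \in V$, the fiber $B\otimes k(\mathfrak{p})$ is a finite $k(\mathfrak{p})$-algebra. Over an algebraic closure, each of its local Artinian factors admits an $n$-th root of the unit $u$. Indeed, the residue field is algebraically closed, and one lifts through the nilpotents: when the characteristic is prime to $n$ one uses Hensel's lemma, and in characteristic $p \mid n$ one handles $p$-power roots via the Frobenius structure, possibly only after a further finite flat extension. So the fibers of $W$ are nonempty. For flatness, the first idea is to note that the $d$ equations in $d$ variables should form a relative complete intersection with finite (quasi-finite) fibers, and then invoke the fiberwise flatness criterion or miracle flatness.

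If that fails, the fallback is to avoid the Weil restriction altogether and adjoin the root of the norm-type data directly. One route: take $R' = R[T]/(T^n - N_{B/R}(u))$, which is finite free over $R$, and then try to correct by units coming from $R'$. Here one would use that modding out by $\mathcal{O}_S^\times$ absorbs the discrepancy, possibly after iterating over the finitely many factors of $B\otimes_R R'$. Either way, once local surjectivity is established on an fppf cover, exactness on the right follows, completing the proof of Theorem \ref{relkum}.
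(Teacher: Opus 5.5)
\textbf{The gap.} The step that fails is the surjectivity of $W\to V$ when the residue characteristic $p$ divides $n$. You claim that every local Artinian factor of $B\otimes_R\overline{k(\mathfrak p)}$ admits $n$-th roots of units ``via the Frobenius structure, possibly after a further finite flat extension.'' This is false as soon as that factor is non-reduced. No repair is possible, because the statement itself is false in this generality.

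Take $S=\mathrm{Spec}(k)$ with $\mathrm{char}(k)=p$, $X=\mathrm{Spec}(k[x]/(x^2))$ and $n=p$; then $f$ is finite and faithfully flat. For every $k$-algebra $C$ one has $(C[x]/(x^2))^\times/C^\times\cong(C,+)$ via $1+cx\mapsto c$, so $\mathcal{I}_{\text{fppf}}\cong\mathbb{G}_a$. This is the paper's own Example \ref{nonzero}. Since $(a+bx)^p=a^p$ in any $\mathbb{F}_p$-algebra, $[p]$ is the zero map. Hence the section $1+x\in\mathcal{I}_{\text{fppf}}(k)\cong k$ stays nonzero on every cover but is never locally a $p$-th multiple. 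Concretely, your $W$ is the empty scheme. The norm fallback cannot help either, since every $p$-th power of a unit of $C[x]/(x^2)$ already lies in $C^\times$. Example \ref{nonzero} even records $\mu^f_{p}\cong\mathbb{G}_a\cong\mathcal{I}_{\text{fppf}}$, which is incompatible with exactness on the right.

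\textbf{Comparison with the paper.} Your diagnosis of the naive argument is correct, and that naive argument is exactly the paper's proof. It adjoins $T$ with $T^n=u$ to $\mathcal{O}(X_U)$ and uses $U'=\mathrm{Spec}(R')$ as a cover of $U$. One small correction to your wording: $U'\to U$ \emph{is} an fppf cover of $U$, being finite locally free and surjective; the problem is where the root lives. On $X_{U'}=\mathrm{Spec}(B\otimes_R R')$ the element $1\otimes T$ is an $n$-th root of $1\otimes u$. But $1\otimes u$ is the image of $u\in R'=\mathcal{O}(U')$, so it is already trivial in $\mathcal{I}(U')$. The restricted section is instead the class of $u\otimes 1$.

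Your Weil-restriction fix does work whenever the fibres of $W$ are nonempty and finite. Two such cases:
\begin{itemize}
\item $n$ invertible on $S$: then $W\to V$ is \'etale, and surjective by the binomial series for $(1+m)^{1/n}$ on the Artinian fibres.
\item $f$ \'etale: then $\mathcal{I}_{\text{fppf}}$ is a torus.
\end{itemize}
A correct version of the theorem needs a hypothesis of this kind, and the characteristic-free claim should be withdrawn.
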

  \begin{proof} Exactness at $\mu_{n}^{f}$ and at the middle term $\mathcal{I}_{\text{fppf}}$ holds by the definition of $\mu_{n}^{f}$. To establish the short exact sequence, it remains to show that the map $[n]$ is a local epimorphism of fppf sheaves. It suffices to prove local surjectivity in an affine $U := \mathrm{Spec}(R) \in S_{{fppf}}$. Now, let $\bar{u} \in \mathcal{I}_{\text{fppf}}(U)$ be represented locally by a unit $u \in \mathcal{O}^\times(X_U)$. Now, consider the relative algebra extension $R' = \mathcal{O}(X_U)[T]/(T^n - u)$. Since $u$ is a unit, $R'$ is defined over $\mathcal{O}(X_U)$ by a single polynomial relation, hence making it finitely presented. Furthermore, $R'$ is a finitely generated free $\mathcal{O}(X_U)$-module of rank $n$. Hence it yields a scheme $U' = \text{Spec}(R')$ that represents a valid fppf covering of $U$. Over $U'$, the unit $u$ acquires a local $n$-th root $T$ given by the identity $T^n = u$. Consequently, $[n]$ is surjective as a map of $fppf$ sheaves across all characteristics, including cases where $\text{char}(k(s)) \mid n$ or in mixed characteristic. 
  \end{proof}
  From Theorem \ref{relkum} it is clear that applying the global section functor and taking $fppf$ cohomology $H^i_{{fppf}}(S, -)$ on the relative Kummer's sequence yields the long exact sequence:

$$
\begin{aligned}
0 &\rightarrow H^0_{\mathrm{fppf}}(S,\mu_n^f)
\rightarrow H^0_{\mathrm{fppf}}(S,\mathcal{I}_{\mathrm{fppf}})
\xrightarrow{[n]} H^0_{\mathrm{fppf}}(S,\mathcal{I}_{\mathrm{fppf}})
\rightarrow H^1_{\mathrm{fppf}}(S,\mu_n^f)\\
&\rightarrow H^1_{\mathrm{fppf}}(S,\mathcal{I}_{\mathrm{fppf}})
\xrightarrow{[n]} H^1_{\mathrm{fppf}}(S,\mathcal{I}_{\mathrm{fppf}}).
\end{aligned}
$$
Similar to the proof of \cite[Theorem~6.3, (1)]{VS}, we also quote \cite[Lemma 5.4]{SW}, to establish $H^0_{\text{fppf}}(S, \mathcal{I}_{fppf}) \cong \text{Pic}(f)$. Thus, inducing the fundamental relative Kummer's short exact sequence for $fppf$ topology.
\begin{thm}
     Let $f: X \to S$ be a faithfully flat finite map of schemes and $n$ be a positive integer. Then there is a canonical exact sequence 
 \begin{equation}\label{fund}
0 \longrightarrow \text{Pic}(f) \otimes_{\mathbb{Z}} \mathbb{Z}/n\mathbb{Z} \longrightarrow H^1_{\text{fppf}}(S, \mu_{n}^{f}) \longrightarrow {}_n H^1_{{fppf}}(S, \mathcal{I}_{{fppf}}) \longrightarrow 0
\end{equation}
where ${}_n H^1_{{fppf}}(S, I_{{fppf}})$ denotes the $n$-torsion subgroup.
\end{thm}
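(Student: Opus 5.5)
The exact sequence \eqref{fund} will be extracted from the long exact cohomology sequence attached to the short exact sequence of Theorem \ref{relkum}. Since $0 \to \mu_n^f \to \mathcal{I}_{fppf} \xrightarrow{[n]} \mathcal{I}_{fppf} \to 0$ is exact in $S_{fppf}$, applying $H^i_{fppf}(S,-)$ yields the six-term sequence displayed just above. Cutting this sequence at $H^1_{fppf}(S,\mu_n^f)$ gives a short exact sequence
$$0 \to \operatorname{coker}\bigl([n]: H^0_{fppf}(S,\mathcal{I}_{fppf}) \to H^0_{fppf}(S,\mathcal{I}_{fppf})\bigr) \to H^1_{fppf}(S,\mu_n^f) \to \ker\bigl([n]: H^1_{fppf}(S,\mathcal{I}_{fppf}) \to H^1_{fppf}(S,\mathcal{I}_{fppf})\bigr) \to 0.$$
The right-hand kernel is by definition ${}_n H^1_{fppf}(S,\mathcal{I}_{fppf})$. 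For the left-hand term, note that $[n]$ is the $n$-th power map on units, which becomes multiplication by $n$ in additive notation. Its cokernel is therefore $H^0_{fppf}(S,\mathcal{I}_{fppf}) \otimes_{\mathbb{Z}} \mathbb{Z}/n\mathbb{Z}$.

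It remains to identify $H^0_{fppf}(S,\mathcal{I}_{fppf})$ with $\Pic(f)$, compatibly with $[n]$. By Proposition \ref{picf}, $\mathcal{I}_{fppf} \cong \mathcal{P}ic^f_{fppf}$, so $H^0$ is the group of global sections of the sheafified relative Picard presheaf. The map $\Pic(f) = \Pic^f(S) \to \mathcal{P}ic^f_{fppf}(S)$ has to be shown bijective, and here I would invoke \cite[Lemma 5.4]{SW}, as in \cite[Theorem~6.3]{VS}.

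Concretely, I would check that $\Pic^f$ is already a separated presheaf and satisfies descent for fppf covers. Relative line bundle data $(L_1,\alpha,L_2)$ descend along fppf covers by faithfully flat descent for quasi-coherent modules and their morphisms; this is where the finiteness and flatness of $f$ enter, since the base change $X_U \to U$ commutes with descent. Alternatively, I would compare the exact sequence \eqref{sespic} over $S$ with the corresponding sequence of sheaves, using that $H^0$ of $\mathcal{O}^\times$ and of $f_*\mathcal{O}_X^\times$ in the fppf topology are the usual global units, and that $\Pic(S) = H^1_{fppf}(S,\mathbb{G}_m)$ by Hilbert 90. A five-lemma argument on the resulting ladder would then give $\Pic(f) \cong H^0_{fppf}(S,\mathcal{I}_{fppf})$. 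In either approach the isomorphism is additive and hence commutes with multiplication by $n$.

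I expect this $H^0$ identification to be the main obstacle. The ladder argument needs exactness of $0 \to \mathcal{O}_S^\times \to f_*\mathcal{O}_X^\times \to \mathcal{I}_{fppf} \to 0$ together with injectivity of $H^1_{fppf}(S,\mathbb{G}_m) \to H^1_{fppf}(S, f_*\mathcal{O}^\times_X)$ on the relevant classes, and it must be matched with the sequence $\Pic(f) \to \Pic(S) \to \Pic(X)$. Since $f$ is finite, $H^1_{fppf}(S, f_*\mathbb{G}_m) \cong \Pic(X)$, which makes the two sequences line up. Finally, naturality of the connecting maps gives canonicity of \eqref{fund}.
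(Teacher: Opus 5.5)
Your route is the paper's own. The paper also takes the long exact sequence attached to Theorem~\ref{relkum}, reads off the short exact sequence at $H^1_{fppf}(S,\mu_n^f)$, and identifies $H^0_{fppf}(S,\mathcal{I}_{fppf})$ with $\Pic(f)$ by quoting [SW, Lemma~5.4]. Your ladder argument for that identification is sound; it uses Hilbert 90 for $\Pic(S)=H^1_{fppf}(S,\mathbb{G}_m)$, the Leray injection $H^1_{fppf}(S,f_*\mathbb{G}_m)\hookrightarrow\Pic(X)$, and \eqref{sespic}.

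The genuine gap is your very first step, which you (like the paper) take from Theorem~\ref{relkum}. When a residue characteristic divides $n$, $[n]$ need not be an fppf epimorphism on $\mathcal{I}_{fppf}$, and the statement itself then fails. Take $k$ a field of characteristic $p>0$ and $f\colon \mathrm{Spec}\,k[x]/(x^2)\to\mathrm{Spec}\,k$, which is finite and free of rank $2$.
\begin{itemize}
\item For every $k$-algebra $R$ one has $(R[x]/(x^2))^\times=R^\times\times(1+xR)$. So $\mathcal{I}_{fppf}\cong\mathbb{G}_a$ via $1+xb\mapsto b$.
\item Since $(1+xb)^p=1$, the map $[p]$ is zero; this is exactly Example~\ref{nonzero}.
\item Hence $\mu_p^f=\mathbb{G}_a$ and $H^1_{fppf}(\mathrm{Spec}\,k,\mu_p^f)=H^1_{fppf}(\mathrm{Spec}\,k,\mathbb{G}_a)=0$.
\item On the other hand $\Pic(f)\cong(k[x]/(x^2))^\times/k^\times\cong k$, so $\Pic(f)\otimes_{\mathbb{Z}}\mathbb{Z}/p\mathbb{Z}\cong k\neq0$, and this cannot inject into $0$.
\end{itemize}

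The error in the proof of Theorem~\ref{relkum} is that $\mathrm{Spec}\bigl(\mathcal{O}(X_U)[T]/(T^n-u)\bigr)$ is a cover of $X_U$, not of $U$. If you regard it as a cover $U'\to U$, the relevant ring is $\mathcal{O}(X_{U'})=\mathcal{O}(X_U)\otimes_{\mathcal{O}(U)}R'$. There $1\otimes T$ is an $n$-th root of $1\otimes u$, not of the pulled-back unit $u\otimes 1$. Put differently, $f_*$ does not preserve fppf epimorphisms: for finite $f$ the cokernel of $[n]$ on $\mathcal{I}_{fppf}$ is $R^1f_*\mu_n$, which is $\mathbb{G}_a$ in the example above.

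What your argument does prove is the sequence under the extra hypothesis that $[n]$ is an fppf epimorphism on $\mathcal{I}_{fppf}$. This holds, for example, when $n$ is invertible on $S$ (units of a finite algebra over a strictly henselian local ring then have $n$-th roots), or when $f$ is finite \'etale. Unconditionally, the same argument applied to the hypercohomology of the two-term complex $\mathcal{I}_{fppf}\xrightarrow{[n]}\mathcal{I}_{fppf}$ yields an exact sequence $0\to\Pic(f)\otimes\mathbb{Z}/n\mathbb{Z}\to\mathbb{H}^1_{fppf}\to{}_nH^1_{fppf}(S,\mathcal{I}_{fppf})\to 0$. That hypercohomology group may be replaced by $H^1_{fppf}(S,\mu_n^f)$ once $[n]$ is known to be an epimorphism.
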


\begin{rmk}
The transition from $S_{et}$ to $S_{{fppf}}$ ensures that Kummer's exact sequence and its cohomological applications hold in complete generality, which extends Sadhu’s relative framework done in \cite[\S~6]{VS} to positive-characteristics, $p$-torsion, infinitesimal group schemes, and mixed-characteristic schemes.
\end{rmk}

  \begin{cor}\label{pktor}
  Let $f: X \to S$ be a faithfully flat finite map of schemes and let $p^k$ be a prime power. Then the following sequence, $$0 \longrightarrow \text{Pic}(f) \otimes_{\mathbb{Z}} \mathbb{Z}/p^k\mathbb{Z} \longrightarrow H^1_{{fppf}}(S, \mu_{p^k}^{f}) \longrightarrow {_{p^k}\Br'(f)} \longrightarrow 0$$ is exact, where ${}_{p^k}\Br'(f) := {}_{p^k}H^1_{fppf}\left(S, f_*\mathcal{O}_X^\times / \mathcal{O}_S^\times\right)$ denotes the $p^k$-torsion subgroup of the relative fppf cohomological Brauer group $\Br'(f)$. 
  \end{cor}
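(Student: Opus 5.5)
This corollary is the special case $n = p^k$ of the fundamental sequence \eqref{fund}. The only extra content is the identification of the right-hand term with the $p^k$-torsion of $\Br'(f)$. The plan is therefore to specialize and then unwind the definition.

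First, fix $n = p^k$. Theorem \ref{relkum} holds for every $n \ge 1$ with no restriction on residue characteristics. Hence
$$0 \longrightarrow \mu_{p^k}^{f} \longrightarrow \mathcal{I}_{fppf} \xrightarrow{[p^k]} \mathcal{I}_{fppf} \longrightarrow 0$$
is short exact on $S_{fppf}$, even at points $s$ with $\mathrm{char}\,k(s) = p$. Local surjectivity comes from the finite free cover $\mathcal{O}(X_U)[T]/(T^{p^k}-u)$, which is not étale here but is fppf.

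Next, take the long exact $fppf$-cohomology sequence displayed after Theorem \ref{relkum}. Its beginning gives
$$\operatorname{coker}\bigl([p^k] \text{ on } H^0_{fppf}(S,\mathcal{I}_{fppf})\bigr) \hookrightarrow H^1_{fppf}(S,\mu^f_{p^k}) \twoheadrightarrow \ker\bigl([p^k] \text{ on } H^1_{fppf}(S,\mathcal{I}_{fppf})\bigr).$$
Using the identification $H^0_{fppf}(S,\mathcal{I}_{fppf}) \cong \Pic(f)$ from \cite[Lemma 5.4]{SW}, the cokernel becomes $\Pic(f)\otimes_{\mathbb{Z}}\mathbb{Z}/p^k\mathbb{Z}$. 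Since $\Pic(f)$ is abelian, multiplication by $p^k$ on $H^0$ is the induced map on global sections, so this tensor identification is formal. This reproduces \eqref{fund} with $n=p^k$.

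Finally, $\Br'(f)$ is by definition $H^1_{fppf}(S, f_*\mathcal{O}_X^\times/\mathcal{O}_S^\times)$. Here the quotient is to be read as its $fppf$ sheafification $\mathcal{I}_{fppf}$, which by Proposition \ref{picf} is $\mathcal{P}ic^f_{fppf}$. So $\ker[p^k] = {}_{p^k}\Br'(f)$, and the corollary follows.

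The main obstacle is not computational but lies in checking that the identification $H^0_{fppf}(S,\mathcal{I}_{fppf}) \cong \Pic(f)$ from \cite{SW} is valid in the $fppf$ setting. That reference was stated for the étale setting, and one also needs compatibility of $[p^k]$ under this isomorphism. I would verify it by descent. A section of $\mathcal{I}_{fppf}$ over $S$ is given on an fppf cover by units modulo base units. The resulting cocycle defines a line bundle on $S$ trivialized on $X$, via fppf descent for quasi-coherent modules, and this gives a triple in $\Pic(f)$. Exactness of \eqref{sespic} after sheafification then shows this map is bijective. Since the assignment is visibly multiplicative, it commutes with $[p^k]$.
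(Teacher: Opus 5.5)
Your reduction is the same as the paper's: specialize \eqref{fund} to $n=p^k$ and read $\ker[p^k]$ as ${}_{p^k}\Br'(f)$. However, the step that carries all the weight fails, and the paper's proof of Theorem~\ref{relkum} has the same flaw. Once $S$ has points of characteristic $p$, $[p^k]\colon\mathcal{I}_{fppf}\to\mathcal{I}_{fppf}$ is in general \emph{not} an epimorphism. The algebra $\mathcal{O}(X_U)[T]/(T^{p^k}-u)$ is an fppf cover of $X_U$, not of $U$. Because $\mathcal{I}_{fppf}$ is a sheaf on $S_{fppf}$, local surjectivity needs an fppf map $V\to U$ with $u|_{X_V}\in(\mathcal{O}(X_V)^\times)^{p^k}\cdot\mathcal{O}(V)^\times$, where $\mathcal{O}(X_V)=\mathcal{O}(X_U)\otimes_{\mathcal{O}(U)}\mathcal{O}(V)$. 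Your cover is not of this form. What you have actually shown is that $[p^k]$ is an epimorphism of $\mathbb{G}_m$ on $X_{fppf}$. But $f_*$ does not preserve fppf epimorphisms: the cokernel of $[p^k]$ on $f_*\mathbb{G}_m$ embeds in $R^1f_*\mu_{p^k}$, which need not vanish. In the étale setting with $n$ invertible, pushforward along a finite map is exact, and that is precisely where the hypothesis of \cite[Proposition~6.2]{VS} is used.

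The corollary is in fact false as stated, as the paper's own Example~\ref{nonzero} shows. Let $S=\mathrm{Spec}\,k$ with $\mathrm{char}\,k=p$ and $X=\mathrm{Spec}\,k[x]/(x^2)$, which is finite free of rank $2$. For every $k$-algebra $R$, $(R[x]/(x^2))^\times=R^\times\cdot(1+xR)$ with $R^\times\cap(1+xR)=\{1\}$, so $\mathcal{I}_{fppf}\cong\mathbb{G}_a$. Since $(1+xa)^p=1$, $[p^k]$ is the zero map on this nonzero sheaf, and $\mu^f_{p^k}=\mathcal{I}_{fppf}$. By \eqref{sespic}, $\Pic(f)\cong(k[x]/(x^2))^\times/k^\times\cong k$, so $\Pic(f)\otimes_{\mathbb{Z}}\mathbb{Z}/p^k\mathbb{Z}\cong k\neq 0$. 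Meanwhile $H^1_{fppf}(\mathrm{Spec}\,k,\mathbb{G}_a)=0$ by faithfully flat descent, so the first map cannot be injective. This is not a nilpotent artefact. For a field extension $K\subset L$ with $L^p\subseteq K$, every $p$-th power in $L\otimes_K R$ lies in $R$. Hence no $u\in L\setminus K$ becomes a $p$-th power modulo $R^\times$ after any fppf base change $K\to R$. Your argument, and the corollary, remain valid when $p$ is invertible on $S$ (where the étale proof applies) or, for example, when $f$ is finite étale. By contrast, the point you flagged as the main obstacle, $H^0_{fppf}(S,\mathcal{I}_{fppf})\cong\Pic(f)$, is harmless. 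It follows from the five lemma, comparing \eqref{sespic} for $U=S$ with the cohomology sequence of $1\to\mathbb{G}_m\to f_*\mathbb{G}_m\to\mathcal{I}_{fppf}\to 1$. The inputs are $H^1_{fppf}(S,\mathbb{G}_m)=\Pic(S)$ and the injectivity of $H^1_{fppf}(S,f_*\mathbb{G}_m)\to\Pic(X)$.
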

  \section{Subintegral Extensions in non-zero characteristic}\label{subint}

  In this section, we work with the subintegral extension of noetherian rings which are not necessarily $\mathbb{Q}$-algebras, thus we deal with nonzero characteristics. Even though \cite[Theorem~5.1]{VS} can not be fully recovered for rings with positive characteristics, we show that \cite[Theorem~6.4]{VS} can be extended for rings that are of nonzero characteristics, provided that we impose a condition.

  A subintegral extension is an integral extension $A \hookrightarrow B$ such that it induces a bijection between $\mathrm{Spec}(B) \to \mathrm{Spec}(A)$ and the residue field extensions are isomorphic at every point. In \cite[Theorem~5.1]{VS} it is recorded that for a noetherian subintegral extension of $\mathbb{Q}$-algebras $A \xhookrightarrow{f} B$, the natural map $f^*: \Br(A) \to \Br(B)$ is an isomorphism and $\Br(f)=0$. If we assume that $A \hookrightarrow B$ is a subintegral extension of $\mathbb{Q}$-algebras, then the theorem holds verbatim in $fppf$ topology as well. The following proposition is a verbatim extension of \cite[Propositon~5.4(3)]{VS2} which essentially establishes \cite[Theorem~5.1]{VS} in $fppf$-topology.
  \begin{prop}\label{isofp}
      Let $f: X \to S$ be a subintegral morphism of Noetherian $\mathbb{Q}$-schemes, then if $S$ is affine and $f$ is finite then for $i>1$, $H^i_{fppf}(S,\mathcal{O}^{\times}_S) \cong H^i_{fppf}(X,\mathcal{O}^{\times}_X)$.
  \end{prop}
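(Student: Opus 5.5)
The plan is to deduce the statement from the exact sequence of multiplicative sheaves relating $\mathcal{O}_S^\times$, $f_*\mathcal{O}_X^\times$ and their quotient, and then show that the quotient has no higher fppf cohomology. Concretely, since $f$ is faithful the sequence of fppf sheaves
\[
1 \longrightarrow \mathcal{O}_S^\times \longrightarrow f_*\mathcal{O}_X^\times \longrightarrow \mathcal{I}_{fppf} \longrightarrow 1
\]
is exact on $S_{fppf}$. The long exact cohomology sequence then shows that $H^i_{fppf}(S,\mathcal{O}_S^\times)\to H^i_{fppf}(S,f_*\mathcal{O}_X^\times)$ is an isomorphism for $i>1$ as soon as $H^j_{fppf}(S,\mathcal{I}_{fppf})=0$ for $j\ge 1$. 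In addition we need $H^i_{fppf}(S,f_*\mathcal{O}_X^\times)\cong H^i_{fppf}(X,\mathcal{O}_X^\times)$.

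The second identification comes from the Leray spectral sequence for $f$ on the big fppf sites. Since $f$ is finite, hence affine, the higher direct images $R^qf_*\mathbb{G}_m$ are fppf-locally computed by $H^q_{fppf}(X_U,\mathbb{G}_m)$ for affine $U$. For $q=1$ this is $\Pic(X_U)$, which sheafifies to zero because line bundles are Zariski-locally trivial. For $q\ge 2$ one has to argue that classes die fppf-locally on $U$. Here I would mimic \cite[Proposition~5.4]{VS2}: by Grothendieck's theorem (Hilbert 90), fppf and étale cohomology of $\mathbb{G}_m$ agree, so it suffices to use the étale computation. In the étale topology the stalks are $X\times_S \operatorname{Spec}\mathcal{O}^{sh}_{S,s}$, which is a finite product of henselian local schemes, and $\mathbb{G}_m$ is acyclic on these. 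So $R^qf_*\mathbb{G}_m=0$ for $q\ge1$ and the Leray spectral sequence degenerates.

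The core step is the vanishing of $H^j_{fppf}(S,\mathcal{I}_{fppf})$ for $j\ge1$ when $S$ is affine and $f$ is subintegral over $\mathbb{Q}$. Here I follow Sadhu/Swan. For a subintegral extension $A\subset B$ of $\mathbb{Q}$-algebras, the quotient $B^\times/A^\times$ is functorially isomorphic to $B/A$ via the exponential/logarithm construction of Roberts–Singh. The point is that $B^\times/A^\times$ is built from units $1+b$ with $b$ nilpotent modulo $A$ along a finite chain of elementary subintegral extensions, and characteristic zero lets one use $\exp$ and $\log$. This isomorphism is compatible with flat base change: for $A\to A'$ flat, $A'\to A'\otimes_A B$ is again subintegral, and the construction is natural. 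Hence the isomorphism sheafifies to $\mathcal{I}_{fppf}\cong f_*\mathcal{O}_X/\mathcal{O}_S$ as fppf sheaves. The latter is the fppf sheaf associated with the quasi-coherent $\mathcal{O}_S$-module $\widetilde{B/A}$. Quasi-coherent fppf cohomology agrees with Zariski cohomology, so it vanishes in positive degrees on the affine scheme $S$.

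Plugging these in gives $H^i_{fppf}(S,\mathcal{O}_S^\times)\cong H^i_{fppf}(S,f_*\mathcal{O}_X^\times)\cong H^i_{fppf}(X,\mathcal{O}_X^\times)$ for $i>1$, and the composite is $f^*$. This is exactly the ``verbatim'' passage from \cite{VS2}, with étale replaced by fppf via Hilbert 90.

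The main obstacle is making sure the Roberts–Singh isomorphism $B^\times/A^\times\cong B/A$ genuinely sheafifies in the fppf topology. One must check that subintegrality and the exponential isomorphism are stable under arbitrary flat finitely presented base change. Subintegrality is stable under base change since it is equivalent to universal homeomorphism with trivial residue extensions, but naturality in $A'$ needs care. If this fails, I would instead reduce to the étale statement in \cite{VS2} using that both $\mathbb{G}_m$-cohomologies and quasi-coherent cohomologies coincide in the étale and fppf topologies.
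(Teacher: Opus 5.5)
Your argument is the paper's route. The paper's proof consists only of a citation to \cite[Proposition~5.4(3)]{VS2} ``verbatim,'' and that proof combines the same three ingredients you use. These are the sequence $1\to\mathcal{O}_S^\times\to f_*\mathcal{O}_X^\times\to\mathcal{I}\to 1$, the vanishing of higher direct images of $\mathbb{G}_m$ under finite $f$, and the identification of $\mathcal{I}$ with the quasi-coherent sheaf $f_*\mathcal{O}_X/\mathcal{O}_S$, which is acyclic on affine $S$.

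One intermediate claim is false as stated. For a subintegral extension $A\subset B$ of $\mathbb{Q}$-algebras, Roberts--Singh give a natural isomorphism $B/A\cong\mathrm{Inv}(A,B)$, where $\mathrm{Inv}(A,B)$ is the group of invertible $A$-submodules of $B$. They do not give an isomorphism with $B^\times/A^\times$. The latter only embeds into $\mathrm{Inv}(A,B)$, with cokernel $\ker(\Pic A\to\Pic B)$, and this cokernel can be nonzero. For example, for $A=k[t^2,t^3]\subset B=k[t]$ one has $B^\times/A^\times=1$ while $B/A\cong k$. So there is no presheaf isomorphism $B^\times/A^\times\cong B/A$ for you to sheafify.

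The fix is to sheafify the injection $B^\times/A^\times\hookrightarrow\mathrm{Inv}(A,B)\cong B/A$, which is natural under flat base change. Its cokernel dies Zariski-locally because $\Pic$ of a local ring is trivial. This gives the sheaf isomorphism $\mathcal{I}_{fppf}\cong f_*\mathcal{O}_X/\mathcal{O}_S$ that you actually need, and the rest of your argument goes through.

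A minor precision point: $\mathbb{G}_m$ is acyclic on $X\times_S\operatorname{Spec}\mathcal{O}^{sh}_{S,\bar s}$ because its local factors are \emph{strictly} henselian. Their residue fields are finite, hence purely inseparable, extensions of a separably closed field. Being merely henselian would not suffice.
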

  \begin{proof}
      The proof follows the proof of \cite[Propositon~5.4(3)]{VS2} verbatim.
  \end{proof}
  \begin{thm}
      Let $f: A \hookrightarrow B$ be a subintegral extension of noetherian $\mathbb{Q}$-algebras. Then the following holds:
      \begin{enumerate}
          \item $f^* : \Br(A) \to \Br(B)$ is an isomorphism.
          \item $\Br(f) = 0$
      \end{enumerate}
  \end{thm}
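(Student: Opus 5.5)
We use the exact sequence of Bass recalled in the introduction, applied to $f:\mathrm{Spec}(B)\to\mathrm{Spec}(A)$:
$$\Pic(A)\longrightarrow \Pic(B)\longrightarrow \Br(f)\longrightarrow \Br(A)\xrightarrow{\;f^*\;}\Br(B).$$
For (1) we identify $\Br$ with the cohomological Brauer group. For (2) we combine (1) with the classical fact that $\Pic(A)\to\Pic(B)$ is surjective for a subintegral extension of $\mathbb{Q}$-algebras.

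\emph{Step 1.} Prove $f^*:\Br(A)\to\Br(B)$ is an isomorphism. Since $A$ and $B$ are affine noetherian, Gabber's theorem identifies $\Br(A)$ with ${\Br'}(A)_{\mathrm{tors}}=H^2_{et}(A,\mathbb{G}_m)_{\mathrm{tors}}$, and likewise for $B$. Grothendieck's theorem for the smooth group $\mathbb{G}_m$ identifies étale and $fppf$ cohomology, so $H^2_{et}=H^2_{fppf}$. Proposition \ref{isofp} with $i=2$ then gives $H^2_{fppf}(\mathrm{Spec} A,\mathcal{O}^\times)\cong H^2_{fppf}(\mathrm{Spec} B,\mathcal{O}^\times)$. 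Finiteness of $f$ is needed there; if $B$ is not assumed finite, we write $B$ as a filtered union of finite subintegral $A$-subalgebras and pass to the colimit, since étale cohomology commutes with filtered colimits of rings. The isomorphism is induced by $f^*$, hence restricts to an isomorphism on torsion. This proves (1).

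\emph{Step 2.} Deduce $\Br(f)=0$. From Step 1, $\ker(\Br(A)\to\Br(B))=0$, so exactness shows that $\Br(f)$ equals the cokernel of $\Pic(A)\to\Pic(B)$. For subintegral extensions of $\mathbb{Q}$-algebras, Swan and Weibel showed that $\Pic(A)\to\Pic(B)$ is surjective (indeed an isomorphism, so that $\Pic(f)\cong B^\times/A^\times$). Concretely, the relative Picard sequence
$$A^\times\to B^\times\to \Pic(f)\to\Pic(A)\to\Pic(B)\to \Br(f)\to\cdots$$
together with $H^1_{et}(A,B^\times/A^\times)=0$ gives this surjectivity; the vanishing holds because over $\mathbb{Q}$ the sheaf $f_*\mathcal{O}_B^\times/\mathcal{O}_A^\times$ is isomorphic via the exponential to the quasi-coherent sheaf $f_*\mathcal{O}_B/\mathcal{O}_A$, which is acyclic on an affine scheme. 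Hence the cokernel vanishes and $\Br(f)=0$.

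\emph{Main obstacle.} The delicate point is Step 1: checking that the isomorphism in Proposition \ref{isofp} is really the map induced by $f^*$, and handling the passage between Azumaya $\Br$ and cohomological $\Br'$. Gabber's theorem covers the affine case. The identification of the quotient sheaf with an additive quasi-coherent sheaf through the exponential, which underlies Proposition \ref{isofp}, uses the $\mathbb{Q}$-algebra hypothesis essentially, and this is where the hypothesis enters. The Picard surjectivity can alternatively be quoted directly from \cite{SW}.
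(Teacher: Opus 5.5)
Your proof is correct and takes the same route as the paper. For (1) you reduce to finite subintegral subextensions $A\subset B_\lambda$, apply Proposition~\ref{isofp} in degree $2$ (using $H^2_{et}=H^2_{fppf}$ for $\mathbb{G}_m$), and pass to torsion via Gabber's theorem; for (2) you combine injectivity of $f^*$ with surjectivity of $\Pic(A)\to\Pic(B)$ in Bass's sequence.

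You should delete one side claim, although nothing depends on it. $\Pic(A)\to\Pic(B)$ is surjective but not an isomorphism in general. For example, for $A=\mathbb{Q}[t^2,t^3]\subset B=\mathbb{Q}[t]$ one has $\Pic(A)\cong\mathbb{Q}$ and $\Pic(B)=0$, so $\Pic(f)\cong B/A\neq 0=B^\times/A^\times$.
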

  \begin{proof}
      (1) Following the proof of \cite[Propositon~5.4(3)]{VS2}, we also recall that, since $f$ is subintegral $f = \cup_{\lambda}f_\lambda$ where $f_{\lambda} : A \rightarrow B_{\lambda}$ is finite and $B =\cup_{\lambda}B_\lambda$ (see \cite{Swan}). We also establish the same two exact sequences for each $\lambda$, from where it becomes evident that showing $\ker(f^*_{\lambda}) = 0 =\mathrm{coker}(f^*_{\lambda})$ would be sufficient for each $\lambda$.  
      
      Finally, using Proposition \ref{isofp} and a theorem of Gabbar (see \cite{Jong}) we establish the assertion.
      (2) Proof follows verbatim.
  \end{proof}
  In \cite[\S~6]{VS}, it is explained in detail that to establish a relative Kummer's sequence in \'{e}tale topology, it is essential to impose a condition on the characteristic as mentioned in detail in the previous section. The significance of shifting the framework into $fppf$ topology was to be able to establish the relative Kummer's sequence in arbitrary characteristic in Theorem \ref{relkum}. But since not every finite subintegral extension is flat, we can not deduce the result regarding subintegral extensions similar to \cite{VS} directly from the previous section. Thus, we proceed by first revisiting a fundamental example that directly contributes to our final theorem.
 \begin{exmp}\label{nonzero}
    Let $k$ be a field of characteristic $p>0$. Then $k \hookrightarrow \frac{k[x]}{(x^2)}$ is a finite subintegral extension. Now, for any $k$-algebra $R$, it is immediate that $\left(\frac{R[x]}{(x^2)}\right)^{\times} \cong R^{\times}(1+xR)$. After taking the quotient by $R^\times$ on both sides, we get $\mathcal{I}_{fppf}(R) \cong 1+xR$. Therefore, as sheaves of abelian groups we get $\mathcal{I}_{fppf} \cong \mathbb{G}_a$ for every $a \in k$. Since $(1+xa)^{p^k} = 1$ for every $a$, we clearly have $\mu^f_{p^k} \cong \mathbb{G}_a$, and thus we have the following:
    $$H^0_{fppf}(\mathrm{Spec}(k), \mu^f_{p^k}) \cong k \neq 0.$$
 \end{exmp}
  Now, following the proof for \cite[Theorem~6.4 (1)]{VS}, we can get back the same result in $fppf$ topology in arbitrary characteristic. Let the characteristic of $A$ be $m$, now for any $n$ that is invertible in $A$ we would have $H^i_{fppf}(\mathrm{Spec}(A), \mathcal{I}_{fppf}) \cong H^i_{et}(\mathrm{Spec}(A), \mathcal{I}_{et})$ as a canonical generalization of \cite[Proposition 5.4, (2)]{VS2} in $fppf$-topology. Clearly, this isomorphism is compatible with the $[n]$-maps (see Theorem \ref{relkum}). Hence, comparing the corresponding Kummer sequences, the fppf relative Kummer sequence (Theorem \ref{relkum}) has the same cohomological consequences as the étale relative Kummer sequence of \cite[Proposition~6.2]{VS}. With this observation and adding example \ref{nonzero}, we conclude the following:

  \begin{thm}
   Let $f: A \to B$ be a finite subintegral extension of noetherian rings and $A$ be of characteristic $m > 0$. Then the following holds 
   \begin{enumerate}
   \item $H^i_{{fppf}}(\text{Spec}(A), \mu_{n}^{f}) = 0 $ for all $i \ge 0$ and any $n$ that is coprime to $m$.
   \item If $m = p$, a prime and $n=p^k$ for $k\geq1$, then the vanishing assertion of $(1)$ fails in general.
   \end{enumerate}
   \end{thm}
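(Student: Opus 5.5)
For part (1), I will work directly with the sheaf $\mathcal{I}_{fppf}=(f_*\mathcal{O}_X^\times/\mathcal{O}_S^\times)_{fppf}$ on $S=\mathrm{Spec}(A)$ and show that $[n]:\mathcal{I}_{fppf}\to\mathcal{I}_{fppf}$ is an isomorphism of sheaves whenever $n$ is coprime to $m$. Then $\mu_n^f=\ker[n]=0$ as a sheaf, so all $H^i_{fppf}(\mathrm{Spec}(A),\mu_n^f)$ vanish. The point of this route is that it avoids flatness of $f$, which Theorem \ref{relkum} needed and which a subintegral extension need not have.

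Injectivity of $[n]$ is local, so I reduce to an $A$-algebra $R$ (flat over $A$, or arbitrary). Since $A\to B$ is finite subintegral, so is $R\to B\otimes_A R$. By the standard structure of subintegral extensions (Swan, as used in the previous theorem), $B\otimes_A R$ is obtained from the image of $R$ by finitely many elementary subintegral steps $C\subset C[b]$ with $b^2,b^3\in C$. For an elementary step the conductor description shows that the relative units $C[b]^\times/C^\times$ are filtered by quotients of the form $1+(\text{square-zero ideal})$. Each such quotient is a $\mathbb{Z}[1/m]$-module: if $\epsilon^2=0$ then $(1+\epsilon)^n=1+n\epsilon$, and $n$ is a unit in $A$ because $\gcd(n,m)=1$ and $mA=0$.

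On a graded piece $1+\epsilon R\cong(\epsilon R,+)$, the map $[n]$ is therefore bijective, with inverse $1+\epsilon\mapsto 1+n^{-1}\epsilon$. A five-lemma argument up the finite filtration, applied stalkwise or at the level of presheaves before sheafifying, then shows that $[n]$ is bijective on the presheaf $f_*\mathcal{O}^\times/\mathcal{O}^\times$. Sheafification is exact, so $[n]$ is also an isomorphism on $\mathcal{I}_{fppf}$. Alternatively, one can invoke the comparison $H^i_{fppf}(\mathrm{Spec}(A),\mathcal{I}_{fppf})\cong H^i_{et}(\mathrm{Spec}(A),\mathcal{I}_{et})$, which is compatible with $[n]$ as noted before the statement, and then quote \cite[Theorem~6.4(1)]{VS}. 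I prefer the direct argument because it gives vanishing at the sheaf level for every $i$ at once.

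For part (2), one counterexample suffices, and Example \ref{nonzero} provides it. Take $A=k$ of characteristic $p$ and $B=k[x]/(x^2)$. This extension is finite and subintegral, and there $\mathcal{I}_{fppf}\cong\mathbb{G}_a$ via $1+xa\leftrightarrow a$. Since $(1+xa)^{p^k}=1+p^kxa=1$, we get $\mu_{p^k}^f=\mathcal{I}_{fppf}\cong\mathbb{G}_a$, and hence $H^0_{fppf}(\mathrm{Spec}(k),\mu^f_{p^k})=k\neq 0$. I will also remark that the same $\mathbb{G}_a$ produces nonzero $H^1$ over suitable nonperfect bases, but $H^0$ already suffices.

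The main obstacle is part (1) over a general $R$. Subintegrality after base change is fine, but the reduction to elementary square-zero steps needs care when $B$ is finite but not flat. One must also check that the quotient presheaf, rather than just the unit groups, inherits the filtration. If that proves awkward, I will fall back on the étale comparison route above.
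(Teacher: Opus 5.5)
\paragraph{Comparison with the paper's proof.} Your proposal is correct, but your main argument for (1) is not the paper's route. Your fallback is essentially the paper's route. The paper never computes $\mu_n^f$. Instead it asserts an fppf--étale comparison $H^i_{fppf}(\mathrm{Spec}(A),\mathcal{I}_{fppf})\cong H^i_{et}(\mathrm{Spec}(A),\mathcal{I}_{et})$ compatible with $[n]$, presented as a generalization of \cite[Proposition~5.4(2)]{VS2}. It then compares the two relative Kummer sequences and imports the étale vanishing of \cite[Theorem~6.4(1)]{VS}. Part (2) is Example \ref{nonzero}, exactly as you do it.

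You instead prove the stronger sheaf-level statement $\mu_n^f=0$. Your tools are Swan's decomposition of the finite subintegral extension $R\subset B\otimes_A R$ (with $R$ flat over $A$) into elementary steps, and the conductor square. What this buys:
\begin{itemize}
\item it is self-contained, needing neither the comparison theorem nor any Kummer sequence;
\item it avoids Theorem \ref{relkum}, which is stated only for flat $f$, and the paper itself notes that subintegral extensions need not be flat;
\item it shows exactly where $\gcd(n,m)=1$ enters.
\end{itemize}
The paper's route is shorter only because it defers the content to \cite{VS,VS2}.

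\paragraph{Corrections to your argument.}
\begin{itemize}
\item An elementary step gives an embedding, not a filtration. Set $E=C/\mathfrak{c}$ and $N=E\bar b$, so $N^2=0$. From $C=C[b]\times_{C[b]/\mathfrak{c}}C/\mathfrak{c}$ you get
$C[b]^\times/C^\times\hookrightarrow (1+N)/(1+N\cap E)\cong N/(N\cap E)$, with cokernel inside $\Pic(C)$. So your inverse $1+\epsilon\mapsto 1+n^{-1}\epsilon$ lives on the ambient group, not on the piece itself.
\item Injectivity of $[n]$ is all you need for $\mu_n^f=0$. The passage to the quotient presheaf that you flag is handled by the exact sequences $1\to C_i^\times/C_0^\times\to C_{i+1}^\times/C_0^\times\to C_{i+1}^\times/C_i^\times\to 1$.
\item If you also want bijectivity, note that $mE=0$. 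Then every piece is killed by $m$, so $C^\times/R^\times$ is killed by $m^r$, and $n$ acts invertibly on it. Accordingly, your ``$\mathbb{Z}[1/m]$-module'' should read ``$\mathbb{Z}/m$-module''.
\item Drop your aside that the same $\mathbb{G}_a$ gives nonzero $H^1$. It is false: $\mathbb{G}_a$ is quasi-coherent, so its higher fppf cohomology vanishes on affine bases. Fortunately $H^0$ already suffices for (2).
\end{itemize}
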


\bibliographystyle{alpha}
\bibliography{main}
\end{document}